\theoremstyle{remark}
\newtheorem{remark}{Remark}
\theoremstyle{plain}
\newtheorem{theorem}{Theorem}
\newtheorem{proposition}{Proposition}
\newtheorem{lemma}{Lemma}
\newcommand{\be}{{\mathrm b}}
\newcommand{\en}{{\mathrm e}}
\newcommand{\A}{{\mathcal A}}
\newcommand{\R}{{\mathbb R}}
\newcommand{\Z}{{\mathbb Z}}
\newcommand{\Q}{{\mathbb Q}}
\newcommand{\eps}{{\varepsilon}}
\newcommand{\vect}{{\mathbf v}}
\newcommand{\wect}{{\mathbf w}}
\newcommand{\uect}{{\mathbf u}}
\newcommand{\xect}{{\mathbf x}}
\newcommand{\yect}{{\mathbf y}}
\begin{document}

\title{Rotational Interval Exchange Transformations}
\author{Alexey TEPLINSKY
\thanks{Institute of Mathematics, Natl.\ Acad.\ Sc.\ Ukraine; e-mail:\ teplinsky.a@gmail.com}}
\maketitle
\begin{abstract}
We show the equivalence of two possible definitions of a rotational interval exchange transformation: by the first one, it is a first return map for a circle rotation onto a union of finite number of circle arcs, and by the second one, it is an interval exchange with a scheme (in the sense of interval rearrangement ensembles), whose dual is an interval exchange scheme as well.
\end{abstract}

\section{Introduction}
\label{sect:introduction}

In the paper~\cite{Teplinsky23} we proposed a new concept of interval rearrangement ensembles (IREs) that generalizes the classical for dynamical systems construction of interval exchange transformations~\cite{Keane75,Veech78,Rauzy79,Keane-Rauzy80,Masur82,Veech82}. The cornerstone in our concept is the duality involution in the space of schemes (i.~e., discrete components) of IREs that produces a dual IRE scheme to every given IRE scheme, and this duality reverses time for the Rauzy--Veech type induction. Interval exchange schemes constitute a partial case of IRE schemes and, as it appeared, may be or may be not interval exchange schemes themselves. In a sense, the space of all IRE schemes is the extension of the space of all (multi-segment) interval exchange schemes due to the duality operation. On the other hand, in the space of all interval exchange schemes, there is a subspace consisting of those whose duals are interval exchange schemes as well. Interval exchange transformations with schemes from this subspace we call rotational, because these exchanges are related to circle rotations, namely---they are first return maps for circle rotations onto a union of finite number of arcs. Effectively, we talk about two approaches to defining one and the same object: the first approach is through the duality on IRE schemes, and the second one is through first return maps on a circle. The aim of this work is to describe exactly the relation between these approaches. Our results are formulated as three statements of Theorem~1 and proved in corresponding sections of this paper.

We believe that studying rotational interval exchanges in the framework of IRE concept paves way to new results on still open problems in the rigidity theory for circle diffeomorphisms with multiple breaks, similar to those we obtained earlier for circle diffeomorphisms with a single break in~\cite{Teplinsky08ukr,KhaninTeplinsky13}. This is because the most useful tool for investigating circle rotations with special points is renormalization group approach, which substitutes the initial map by a sequence of first return maps onto unions of small neighborhoods, renormalized from exponentially small to macroscopic lengths. In a sequence of first return maps, the next map is obtained from the previous one by applying induction of Rauzy-Veech type, and therefore a duality allowing to reverse time in this process will be an important tool in further studies.

If special points of an irrational circle diffeomorphism are non-degenerate (i.~e., the left derivative is not equal to the right derivative, but both are positive), then the renormalized first return maps approach certain finite dimensional spaces as the lengths of neighborhoods diminish. These spaces consist of linear-fractional maps, as it was first shown in~\cite{KhaninVul91}; and in the case when the product of all break sizes equals~1, the limit spaces consist of affine maps, as investigated in particular in~\cite{CunhaSmania13,CunhaSmania14}. 

The next logical step will be extending the duality from the space of IRE schemes to the corresponding spaces of linear-fractional maps; we work currently on this task and plan future publications on it.

The structure of this paper is following: in Section~\ref{sect:definitions} we remind basic notions of our theory of interval rearrangement ensembles, in Section~\ref{sect:theorem} we formulate the main result as a theorem of three statements, and then prove the statements 1)--3) of this theorem in Sections~\ref{sect:proof_1}--\ref{sect:proof_3} respectively.

\section{The basic notions on IREs and interval exchanges}
\label{sect:definitions}

Here we recall the main notions in theory of interval rearrangement ensembles, which was presented in~\cite{Teplinsky23}.

Let $\A$ be an alphabet of $d\ge1$ symbols; these will be labels for intervals in our rearrangement ensemble. Consider the \emph{doubled alphabet} $\bar\A=\A\times\{\be,\en\}$ (here the letters ``$\be$'' and  ``$\en$'' come from the words ``beginning'' and ``ending'' respectively) and any permutation $\sigma$ of this doubled alphabet, i.e., a bijective map from $\bar\A$ onto $\bar\A$. We call this permutation a \emph{scheme} of an interval rearrangement ensemble (i.e., an ``IRE scheme''), while an \emph{interval rearrangement ensemble} (i.e., an ``IRE'') itself is a pair $(\sigma,\xect)$, in which a scheme $\sigma$ is equipped with a \emph{vector of endpoints} $\xect\in\R^{\bar\A}$, whose coordinates satisfy the equalities
\begin{equation}\label{eq:endpoints_relation}
x_{\alpha\be}+x_{\alpha\en}-x_{\sigma(\alpha\be)}+x_{\sigma(\alpha\en)}=0\quad{\text{for all}}\quad\alpha\in\A.
\end{equation} 
A vector $\xect$ satisfying (\ref{eq:endpoints_relation}) is called \emph{allowed} by the scheme $\sigma$.
For a given IRE $(\sigma,\xect)$, the \emph{vector of lengths} $\vect\in\R^{\A}$ is defined coordinate-wise as
\begin{equation}\label{eq:lengths_relation}
v_\alpha=x_{\sigma(\alpha\be)}-x_{\alpha\be}=x_{\alpha\en}-x_{\sigma(\alpha\en)},\quad\alpha\in\A.
\end{equation}

We call two IREs \emph{shift equivalent}, if both their schemes and their vectors of lengths are the same.
A vector $\vect\in\R^{\A}$ is called a vector of lengths \emph{allowed} by the scheme $\sigma$, if there exists a vector of endpoints allowed by that scheme and satisfying (\ref{eq:lengths_relation}). A pair $(\sigma,\vect)$, in which the vector of lengths $\vect$ is allowed by the scheme $\sigma$, we call a \emph{floating IRE}, to distinguish it from a ``fixed'' IRE $(\sigma,\xect)$. A floating IRE can be seen as an equivalence class of shift equivalent fixed IREs.

In this paper, we mostly work with floating IREs, and regularly use the following simple criterion on whether a vector of lengths is allowed by a given scheme. The key fact here is that a scheme $\sigma$, as a permutation, can be decomposed into $N\ge1$ disjoint cycles of the form $c=c(\bar\xi)=(\bar\xi,\sigma(\bar\xi),\dots,\sigma^k(\bar\xi))$, $\bar\xi\in\bar\A$, where $k\ge0$, $\sigma^{k+1}(\bar\xi)=\bar\xi$, $\sigma^i(\bar\xi)\ne\bar\xi$ for $0<i\le k$, and the cycles $c(\bar\xi)$ and $c(\sigma^i(\bar\xi))$ are considered the same.

\begin{proposition}\label{prop:v=v}
A vector of lengths $\vect$ is allowed by a scheme $\sigma$, if and only if for every cycle $c=c(\bar\xi)$, $\bar\xi\in\bar\A$, in this scheme, the following equality holds:
\begin{equation}\label{eq:v=v}
\sum_{\alpha:\,\alpha\be\in c}v_\alpha=\sum_{\alpha:\,\alpha\en\in c}v_\alpha.
\end{equation}
\end{proposition}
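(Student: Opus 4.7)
My plan is to interpret both (\ref{eq:v=v}) and the length condition (\ref{eq:lengths_relation}) as statements about a telescoping sum around each cycle of the permutation $\sigma$. The key rewriting is this: for every $\alpha\in\A$, the length equations (\ref{eq:lengths_relation}) say that the increment $x_{\sigma(\bar\xi)}-x_{\bar\xi}$ equals $+v_\alpha$ when $\bar\xi=\alpha\be$, and equals $-v_\alpha$ when $\bar\xi=\alpha\en$. Since summing $x_{\sigma(\bar\xi)}-x_{\bar\xi}$ over one full cycle of $\sigma$ must telescope to zero, the identity (\ref{eq:v=v}) is precisely the obstruction to closing up such a walk.

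For the necessity direction, I would fix an allowed $\xect$ and a cycle $c=c(\bar\xi)=(\bar\xi,\sigma(\bar\xi),\dots,\sigma^k(\bar\xi))$, write out
\[
0=\sum_{i=0}^{k}\bigl(x_{\sigma^{i+1}(\bar\xi)}-x_{\sigma^{i}(\bar\xi)}\bigr),
\]
and split the terms according to whether $\sigma^i(\bar\xi)$ has the form $\alpha\be$ or $\alpha\en$, substituting the two alternative expressions for $v_\alpha$ from (\ref{eq:lengths_relation}). The $\be$--terms contribute $+v_\alpha$ and the $\en$--terms contribute $-v_\alpha$, yielding (\ref{eq:v=v}) at once.

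For the sufficiency direction I would build $\xect$ constructively, one cycle at a time. On a cycle $c=(\bar\xi_0,\bar\xi_1,\dots,\bar\xi_k)$, set $x_{\bar\xi_0}:=0$ and recursively define $x_{\bar\xi_{i+1}}:=x_{\bar\xi_{i}}+\epsilon_i v_{\alpha_i}$, where $\bar\xi_i=\alpha_i\tau_i$ and $\epsilon_i=+1$ if $\tau_i=\be$, $\epsilon_i=-1$ if $\tau_i=\en$. The consistency requirement at closure, $x_{\sigma(\bar\xi_k)}=x_{\bar\xi_0}$, is exactly the hypothesis (\ref{eq:v=v}) for that cycle. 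By construction each half of (\ref{eq:lengths_relation}) is enforced at the step where $\alpha\be$ or $\alpha\en$ appears in its respective cycle, and then (\ref{eq:endpoints_relation}) follows by subtracting the two halves of (\ref{eq:lengths_relation}). I do not anticipate any genuine obstacle: the only thing worth being careful about is that $\alpha\be$ and $\alpha\en$ may lie either in the same cycle or in different cycles, but since the two length equalities for a given $\alpha$ are imposed at two independent (and unrelated) steps of the construction, both cases are handled uniformly. In effect, this proposition simply identifies the kernel of the coboundary operator $x\mapsto x\circ\sigma-x$ on $\R^{\bar\A}$ with the span of the cycle-sum relations.
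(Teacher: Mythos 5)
Your proof is correct and follows essentially the same approach as the paper's: necessity by telescoping the increments $x_{\sigma(\bar\eta)}-x_{\bar\eta}$ around each cycle, and sufficiency by fixing a base point in each cycle and recursively propagating the coordinate values using the $\pm v_\alpha$ increments, with \eqref{eq:v=v} serving as the closure condition. Your parenthetical remark about $\alpha\be$ and $\alpha\en$ possibly lying in different cycles is well-taken but raises no new difficulty, exactly as you observe, since each of the two halves of \eqref{eq:lengths_relation} is enforced locally at the corresponding step of the walk.
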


\begin{proof}
Let the vector of lengths $\vect$ be allowed by the scheme $\sigma$. Then there exists a vector of endpoints $\xect$ such that the equalities (\ref{eq:lengths_relation}) hold. In accordance with them, $x_{\sigma(\bar\eta)}-x_{\bar\eta}=v_\alpha$ for $\bar\eta=\alpha\be$, and $x_{\sigma(\bar\eta)}-x_{\bar\eta}=-v_\alpha$ for $\bar\eta=\alpha\en$. The equalities (\ref{eq:v=v}) follow from the equivalences
$$
\sum_{\bar\eta\in c(\bar\xi)} (x_{\sigma(\bar\eta)}-x_{\bar\eta})=(x_{\sigma(\bar\xi)}-x_{\bar\xi})+(x_{\sigma^2(\bar\xi)}-x_{\sigma(\bar\xi)})+\dots+(x_{\bar\xi}-x_{\sigma^k(\bar\xi)})=0
$$
along each cycle $c(\bar\xi)$.

Now let us assume that the vector $\vect$ satisfies all the equalities (\ref{eq:v=v}). For every cycle $c(\bar\xi)$ in the scheme $\sigma$, take an arbitrary number as a coordinate of the endpoint $x_{\bar\xi}$, and the rest of endpoints determine by the following algorithm: if the value of $x_{\bar\eta}$ is already determined, while the value of $x_{\sigma(\bar\eta)}$ is not yet, then we put $x_{\sigma(\bar\eta)}=x_{\bar\eta}+v_\alpha$ in the case $\bar\eta=\alpha\be$, or $x_{\sigma(\bar\eta)}=x_{\bar\eta}-v_\alpha$ in the case $\bar\eta=\alpha\en$. Due to the equalities (\ref{eq:v=v}), the same relations hold for $\bar\eta=\sigma^k(\bar\xi)$, therefore the vector of endpoints $\xect$ is allowed, and $\vect$ satisfies (\ref{eq:lengths_relation}).

Proposition is proved.
\end{proof}

The cornerstone of our theory of IREs is the notion of duality, which reverses time in application of Rauzy--Veech type of induction to IRE schemes (in Sect.~\ref{subsect:induction} below we remind the definitions of elementary steps of this induction). 
Two IRE schemes $\sigma$ and $\sigma^*$ are called \emph{dual} to each other, if
\begin{equation}\label{eq:duality}
\sigma^*(\alpha\be)=\sigma(\alpha\en),\quad
\sigma^*(\alpha\en)=\sigma(\alpha\be)\qquad\text{for all}\qquad\alpha\in\A.
\end{equation}
Through this duality, we will define rotational interval exchange schemes in the next section.

An IRE scheme is called \emph{irreducible}, if an equality $\sigma(\bar\A_0)=\bar\A_0$, where $\bar\A_0=\A_0\times\{\be,\en\}$, $\A_0\subset\A$, implies $\A_0\in\{\varnothing,\A\}$. It is easy to see from the definition of duality~(\ref{eq:duality}) that $\sigma(\bar\A_0)=\sigma^*(\bar\A_0)$ for any subset $\A_0\subset\A$, hence irreducibility of $\sigma$ implies irreducibility of $\sigma^*$, and vice versa. If a scheme is not irreducible, then an IRE with such a scheme is effectivly decomposed into two or more totally independent IREs, and for that reason, in study of their dynamics, it is reasonable to consider only IREs with irreducible schemes.

It follows from Proposition~2 of~\cite{Teplinsky23} that in the case of irreducible scheme $\sigma$, exactly $N-1$ among $N$ equalities~(\ref{eq:v=v}) are linearly independent (the sum of all equalities~(\ref{eq:v=v}) is a trivial equivalence, therefore any $N-1$ of them are linearly independent).

An IRE is called \emph{positive}, if its vector of lengths is positive. An IRE scheme is called positive, if it allows a positive vector of lengths.

A positive IRE must be visualized as an ensemble of $2d$ intervals, paired in $d$ pairs with labels $\alpha\in\A$; inevery such pair, the \emph{beginning interval} $I_{\alpha\be}=[x_{\alpha\be},x_{\sigma(\alpha\be)})$ and the \emph{ending interval} $I_{\alpha\en}=[x_{\sigma(\alpha\en)},x_{\alpha\en})$ with the same label $\alpha$ have the same length $v_\alpha$. In accodance with the cycles in the scheme, all beginning and ending intervals with corresponding subscripts are connected by their endpoints into $N$ closed chains, i.e., one-dimensional polygonal lines. One may consider such a closed polygonal chain as a path going from $x_{\bar\xi}$ to $x_{\sigma(\bar\xi)}$, then from $x_{\sigma(\bar\xi)}$ to $x_{\sigma^2(\bar\xi)}$, and so on until returning to $x_{\bar\xi}$; on this path, every beginning interval is passed from left to right, and every ending interval is passed from right to left. A parallel translation of any of these $N$ closed one-dimensional polygonal lines as a whole, obviously, does not affect the equalities~(\ref{eq:lengths_relation}) and does not change the lengths of intervals; this is the reason why we call a pair $(\sigma,\vect)$ ``floating IRE'', and the corresponding fixed IRE ``shift equivalent''. 

Now, let us consider a special case when every cycle in the scheme $\sigma$ of a positive IRE can be split into two arcs, one consisting of beginning intervals only, and another of ending intervals only (i.e., every cycle has the form $c=(\alpha_1\be,\dots,\alpha_m\be,\beta_n\en,\dots,\beta_1\en)$ for some labels $\alpha_1,\dots,\alpha_m$, $\beta_1,\dots,\beta_n\in\A$, $n,m\ge1$). It is natural to call such IRE an \emph{interval exchange} and associate with the discrete dynamical system (mapping) on a disjoint union of segments $J_1,\dots,J_N$ corresponding to the cycles $c_1,\dots,c_N$ (namely, a segment $J=[x_{\alpha_1\be},x_{\beta_n\en})=\bigcup_{i=1}^mI_{\alpha_i\be}=\bigcup_{i=1}^nI_{\beta_i\en}$ corresponds to the cycle $c$ given above); this mapping shifts every beginning interval onto the ending interval with the same label from the alphabet $\A$. Proposition~\ref{prop:v=v} in this case says that a vector of lengths is allowed by a scheme, if and only if, for every segment $J$, the sum of lengths of all beginning segments included within in it is equal to the sum of lengths of all ending segments within it. We will be calling by the same name ``interval exchange'' a pair $(\sigma,\xect)$, an associated mapping, and an induced one-dimensional dynamical system. A $\sigma$ in this case is called an \emph{interval exchange scheme}. Similar to general IREs, one may consider \emph{floating interval exchanges} $(\sigma,\vect)$, restricting attention to the lengths of intervals only and allowing segments $J_1,\dots,J_N$ to shift freely along the axis, i.e., factorizing the space of all interval exchanges w.r.t.\ the shift equivalence.

For cycles of the above-mentioned form $(\alpha_1\be,\dots,\alpha_m\be,\beta_n\en,\dots,\beta_1\en)$, it is convenient to use more visual ``two-row notation''
$c=\left[\begin{array}{ccc}
\alpha_1 & \dots & \alpha_m \\
\beta_1 & \dots & \beta_n                                                                                                                                           \end{array}\right]$ (the entire interval exchange scheme can also be written in two rows as the set af all its cycles $\sigma=\{c_1,\dots,c_N\}$, $N\ge1$). 
In terms of~\cite{Teplinsky23}, such a ``two-row'' cycle is called a cycle with zero twist number. To be precise, the \emph{twist number} of a cycle in an IRE scheme $\sigma$ is the number of positions in this cycle, where a beginning element is followed by an ending element, i.e., $\sigma(\alpha\be)=\beta\en$ for some $\alpha,\beta\in\A$, minus one. If a cycle consists either of beginning elements only, or of ending elements only, then its twist number is $-1$, however the condition of positivity makes this case impossible.
The twist number $T(\sigma)$ of a scheme is the sum of the twist numbers of all its cycles. Therefore, in this terminology, an \emph{interval exchange scheme} is a positive IRE scheme with zero twist number, and an \emph{interval exchange} itself is a positive IRE with such a scheme.

The definition above determines a classical interval exchange transformation, if its scheme consists of a single cycle, and the left endpoint of the corresponding segment lies at the origin; these restrictions do not look natural to us, and so we call ``interval exchange'' the more general construction on multiple segments. In Sect.~3 of~\cite{Teplinsky23}, before we defined an IRE, we formulated a classical-like definition for a multi-segment interval exchange transformation. One may see how much simpler is the definition of interval exchange in framework of IRE concept comparing to the classical-like definition.

\section{Rotational schemes and the main result}
\label{sect:theorem}

For an interval exchange scheme $\sigma$, its dual $\sigma^*$ does not have to, but can be an interval exchange scheme as well, and the schemes with this property compose an important special class, which is the subject of this paper. Thus, we call an interval exchange scheme \emph{rotational}, if its dual is an interval exchange scheme as well. An interval exchange with a rotational scheme is called a \emph{rotational interval exchange}.

The \emph{twist total} of an IRE scheme $\sigma$ (see~\cite{Teplinsky23}) is the number $T(\sigma)+T(\sigma^*)$, i.e., the sum of the twist numbers of schemes $\sigma$ and $\sigma^*$. In these terms, a \emph{rotational} scheme is an IRE scheme that is positive together with its dual and has zero twist total, while a \emph{rotational interval exchange} is a positive IRE with such a scheme. Accordingly to Sect.~10 of~\cite{Teplinsky23}, these are exactly those schemes, whose positive natural extensions generate translation surfaces of genus $g=1$, i.e., 2-d tori with no singular points.

It is clear that the class of all rotational schemes is closed w.r.t.\ duality operation. But why did we name such interval exchanges and their schemes ``rotational''? It is because they are directly related to circle rotations. The exact statement on this relation is the main result of the current paper, and it will be stated later in this section.

A rotation of a circle of length $L>0$ by distance $M$ is the map given by
\begin{equation}\label{eq:turn}
R_{L,M}:a\mapsto a+M,\quad a\in\R/L\Z,
\end{equation}
where the factor space $\R/L\Z$ is the circle of length $L$ by itself. A circle rotation is called \emph{irrational}, if its \emph{rotation number} $\rho=\{M/L\}\not\in\Q$. Here $\{\,\cdot\,\}$ denotes fractional part of a real number. 

Alternatively, we interpret a circle rotation as its projection onto an arbitrarily chosen half-open segment $[x_0,x_0+L)$, $x_0\in\R$, i.e., a piece-wise linear map
\begin{equation}\label{eq:turn_alt}
R_{L,M}:x\mapsto x+M-\left[\frac{x+M-x_0}{L}\right]L,\quad x\in[x_0,x_0+L),
\end{equation}
which is a rotational (two) intervals exchange by itself. Here $[\,\cdot\,]$ denotes integer part of a real number.

We call an \emph{arc} any half-open segment on a circle. In what follows, we will consider unions of finite number of arcs and interpret them, in accordance with the alternative definition~(\ref{eq:turn_alt}), as a union of finite number of half-open segments of a real line. If such a \emph{union of arcs} (we omit the words ``finite number of'', considering only finite unions) does not cover the whole circle, then it is natural to choose $x_0$ as a projection to $\R$ of any circle point that does not belong to the interior of any arc; it is also natural to consider as separate segments in this union the maximal ones (i.e., if two arcs overlap or touch by endpoints, then their projections to $\R$ should be united in one segment). In the case, when a union of arcs is the whole circle, the endpoint $x_0$ can be taken arbitrarily and interpret this union of arcs as the whole real segment $[x_0,x_0+L)$.

Finally, we can formulate our main result. Notice, that the third statement of this theorem is the main one, while the first two are rather additional: the qualitative (discrete) data is more important than the quantitative (real) one, since the space of allowed lengths is determined by an IRE scheme, and not otherwise around. However, we have chosen to order three theorem statements like this due to the logic of their proof: the third one's proof is based on the proofs of the first two, and in part the third statement is their direct implication (see Sect.~\ref{sect:proof_3} for more details). The proof of this theorem is constructive in a sense that the existence of all mentioned objects is proved by their algorithmic construction. In particular, notice the canonical form of rotational interval exchange we introduce in Sect.~\ref{subsect:canonic}, which is related to the so-called dynamical partitions of a circle (see~\cite{KhaninTeplinsky13}) and as such is the most convenient ``intermediary'' between general rotational interval exchanges and circle rotations.

\begin{theorem}
\label{theorem:main}
1) For any irrational circle rotation, the first return map onto any subset, which is a union of arcs, is an irreducible rotational interval exchange.

2) For any irreducible rotational interval exchange, there exists a first return map for a circle rotation onto a union of arcs, which is shift equivalent to that interval exchange.

3) An irreducible interval exchange scheme is rotational, if and only if there exists a first return map for an irrational circle rotation onto a union of arcs, which is an interval exchange with that scheme.
\end{theorem}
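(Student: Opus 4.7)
My plan is to deduce statement~3 from statements~1 and~2, supplementing statement~2 with an irrationality argument in the forward direction. The backward implication is immediate: if the first return map of an irrational circle rotation onto a union of arcs is an interval exchange with scheme $\sigma$, then statement~1 says this map is an irreducible rotational interval exchange, so in particular $\sigma$ is irreducible and rotational.

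For the forward direction, assume $\sigma$ is an irreducible rotational scheme. Rotationality includes positivity, so the cone $C$ of positive vectors of lengths allowed by $\sigma$ is nonempty. For every $\vect \in C$ the IRE $(\sigma,\vect)$ is an irreducible rotational interval exchange, and statement~2 produces a circle rotation $R_{L(\vect),M(\vect)}$ together with a union of arcs whose first return map realises $(\sigma,\vect)$ up to shift equivalence. From the construction used to prove statement~2, through the canonical form promised in the introduction, $L(\vect)$ and $M(\vect)$ will appear as explicit linear combinations of the coordinates $v_\alpha$ with nonnegative coefficients read off from the cycle structures of $\sigma$ and $\sigma^*$. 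So everything reduces to choosing $\vect \in C$ for which $\rho(\vect):=\{M(\vect)/L(\vect)\}$ is irrational.

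The cone $C$ sits inside the linear subspace $V \subset \R^\A$ cut out by the cycle equalities of Proposition~\ref{prop:v=v}; by the remark following that proposition, $\dim V = d-N+1$, where $N$ is the number of cycles of $\sigma$ and $d=|\A|$. For an irreducible rotational scheme one has $\dim V \ge 2$: otherwise $N=d$, in which case every cycle of $\sigma$ contains exactly one beginning and one ending symbol, which forces $\sigma^*(\alpha\be)=\sigma(\alpha\en)$ to be a beginning for every $\alpha$; hence $\sigma^*$ has cycles consisting entirely of beginnings and entirely of endings, all with twist number~$-1$, contradicting positivity of $\sigma^*$. Because $\rho$ is homogeneous of degree zero in $\vect$ and $C$ is open in a space of dimension at least~$2$, the set $\rho^{-1}(\Q)\cap C$ is the intersection of $C$ with the countable family of affine hyperplanes $\{qM(\vect)=kL(\vect)\}$ indexed by $q\in\Z_{>0}$ and $k\in\Z$; provided $L$ and $M$ are linearly independent as forms on $V$, each such hyperplane is proper and the union is meager in $C$. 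Any $\vect$ in its complement yields the desired irrational realisation. The main obstacle I anticipate is precisely this last linear-independence check: extracting from the proof of statement~2 enough explicit information about $L$ and $M$ to confirm that $M/L$ varies in at least one direction transverse to the radial line $\R_{>0}\vect$ inside $C$, so that $\rho$ is not secretly constant.
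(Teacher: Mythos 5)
Your backward implication and your overall decomposition (deduce 3 from 1 and 2, plus an irrationality supplement) match the paper's structure exactly. Your observation that $\dim V\ge 2$ for an irreducible rotational scheme is a correct and nice side remark --- if $N=d$ then every cycle is two-element, which forces $\sigma^*$ to have cycles of twist number $-1$, contradicting its positivity --- but the paper never needs it.

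The gap you flag at the end is a genuine one, and the paper's proof is organized precisely so as not to have to face it. You would need to establish that $\rho(\vect)=\{M(\vect)/L(\vect)\}$ is non-constant on the cone $C$, which requires tracking how the canonical lengths $v_\alpha,v_\beta$ depend (piecewise linearly, since the branch of the reduction algorithm may change) on the original $\vect$, and then showing the two resulting linear forms are independent on $V$ on some branch. That is not obvious and you do not establish it. The paper sidesteps the whole genericity/Baire argument: it fixes one $\vect$, runs the reduction to canonical form $(\sigma_{\mathrm{can}},\vect_{\mathrm{can}})$, and observes that in canonical form there are effectively only two free lengths $v_\alpha,v_\beta$ with $\rho_0=v_\beta/v_\alpha$. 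If $\rho_0$ happens to be rational, it perturbs $v_\beta\mapsto v_\beta+\varepsilon$ with $\varepsilon/v_\beta\notin\Q$ directly in canonical form, and then runs the reduction algorithm \emph{backwards} (the recorded sequence of induction steps and interval splittings). The key point, which replaces your linear-independence check, is robustness: each step of the backward algorithm is a map whose discrete part (the scheme) is locally constant under small perturbations of the lengths, so the perturbed canonical IRE flows back to an interval exchange with the \emph{same} scheme $\sigma$ but now realized as a first return map of an irrational rotation. This is constructive and avoids any appeal to measure or Baire category, and it is exactly the ingredient your proposal is missing.
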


\begin{remark}
The theorem is formulated for irreducible schemes and interval exchanges, and a (single) circle rotation corresponds to each of them. If a interval exchange scheme consists of several irreducible components, then a dynamical system splits into the same number of dynamically independent components, and a first return map should be considered for a union of that many circle rotations. Hence, the statements~2) and~3) of the theorem can be reformulated by omitting mentions of irreducibility, while replacing a circle rotation with a union of circle rotations (irrational circle rotations---in the statement~3), whose number equals the number of irreducible components of the scheme.
\end{remark}

\begin{remark}
It is worth of explanation, why in the statements~1) and~3) irrationality of a circle rotation is mentioned, while in the statement~2) it is not. This is because if an interval exchange is a first return map onto a union of arcs for an irrational circle rotation, then some small perturbation of parameters in such a system produces an interval exchange with the same scheme, which is a first return map onto a union of arcs for a rational circle rotation (close to the starting irrational one). Accordingly to this fact, every such scheme (and due to the third statement of the theorem, this is the case for all rotational schemes) allows both irrational and rational (in above-mentioned sense) interval exchanges. However, on the other hand, there exist interval exchange schemes that allow only rational interval exchanges, namely the schemes containing a chain of cycles of the form
$$
\left\{
\left[\begin{array}{c}
\alpha_1\\                                                                                                                                        
\alpha_2
\end{array}\right];\quad
\left[\begin{array}{c}
\alpha_2\\                                                                                                                                       
\alpha_3
\end{array}\right];\quad\dots;\quad
\left[\begin{array}{c}
\alpha_m\\                                                                                                                                       
\alpha_1
\end{array}\right]
\right\}$$ 
or a single cycle of the form $
\left[\begin{array}{c}
\alpha\\                                                                                                                                        
\alpha
\end{array}\right]$. This kind of periodic chains of cycles is characteristical for rational interval exchanges: an arc moves along a certain trajectory on the circle, and finally returns onto itself. A dual to the IRE scheme shown above contains a pair of cycles
$$
\left\{\left[\begin{array}{ccc}
\alpha_m & \dots & \alpha_1\\                                                                                                                                        
& \varnothing &
\end{array}\right];\quad 
\left[\begin{array}{ccc}                                                                                                                                        
& \varnothing &\\
\alpha_m & \dots & \alpha_1
\end{array}\right]
\right\}
$$
(here $\varnothing$ denotes the absence of elements in a row), and it is obviously not positive. Interval exchange schemes of this kind have negative twist total, and their natural extensions do not produce any translation surfaces. Because of these reasons, we do not call such schemes and interval exchanges with such schemes rotational, although such interval exchanges can be first return maps for (rational only!) circle rotations onto certain unions of arcs.
\end{remark}

\section{Proving the first statement of Theorem}
\label{sect:proof_1}

\subsection{First return maps are finite}

Let us remind what is a first return map for a dynamical system with descrete time to a subset of its phase space. Assume that such a dynamical system is given as a map $f:X\to X$, and a non-empty subset $\Gamma\subset X$ has the following property: for every point $x\in\Gamma$ there exists a positive integer $n$ such that $f^{n}(x)\in\Gamma$, i.e., the trajectory of the point $x$ returns to the set $\Gamma$ after certain time $n$. Denote by $n(x,f,\Gamma)$ the \emph{first return time} of $x$ due to the action of $f$ to $\Gamma$, i.e., the smallest of such numbers: $f^i(x)\not\in\Gamma$ for all $1\le i<n(x,f,\Gamma)$, and $f^{n(x,f,\Gamma)}(x)\in\Gamma$. The map $f_\Gamma:x\mapsto f^{n(x,f,\Gamma)}(x)$ is called the \emph{first return map} for $f$ to $\Gamma$. Obviously, this map defines on this subset a new, induced, dynamical system $f_\Gamma:\Gamma\to\Gamma$.

Accordingly to this definition, for a given number $n\ge1$, all the points $x\in\Gamma$ such that $n(x,f,\Gamma)=n$, form a set $\Gamma_n=\Gamma\cap f^{-n}(\Gamma)\backslash f^{-(n-1)}(\Gamma)\backslash\dots\backslash f^{-1}(\Gamma)$. There is a disjoint splitting  $\Gamma=\bigcup_{n=1}^{+\infty}\Gamma_n$, $\Gamma_n\cap\Gamma_m=\emptyset$ for $n\ne m$, and we have $f_\Gamma(x)=f^n(x)$ for every $x\in\Gamma_n$, $n\ge1$.

Let us show that in the case of a circle rotation $R=R_{M,L}$ the first return map to any union of a finite number of arcs $\Gamma$ is always \emph{finite}, i.e., the set of first return times of all points of this union $\{n(x,f,\Gamma)|x\in\Gamma\}$ is bounded. 

If the rotation number is rational, i.e., $\rho=M/L=p/q$, where $p$ and $q$ are mutually prime positive integers, then the equivalence $R^q(x)\equiv x$ holds, therefore $n(x,R,\Gamma)\le q$ for all $x$ of the circle, and thus the first return map for a rational circle rotation to any subset (not just to a union of arcs) is always defined and finite.

Now assume that the rotation number $\rho$ is irrational. In this case, a trajectory of any point $x$ is everywhere dense on the circle, therefore for every $\delta>0$ there exists a positive integer $n_0$ such that there is no circle arc of length $\delta$ free from the points of the finite segment of this trajectory $(R^i(x))_{i=1}^{n_0}$. Moreover, since all the trajectories of a circle rotation are equal in their spacing, this $n_0$ does not depend on $x$. Taking the length of the shortest arc from the union $\Gamma$ as $\delta$, we obtaion the bound $n(x,R,\Gamma)\le n_0$ for all $x$ on the circle.

We have shown that a first return map for $R$ to $\Gamma$ is indeed finite: $\Gamma=\bigcup_{n=1}^{n_0}\Gamma_n$   
for some finite $n_0$. And since both intersection and union of any two finite unions of arcs are finite union of arcs as well, then every set $\Gamma_n$ is a finite union of arcs too; on every its arc the first return map $R_\Gamma$ is a shift (by distance $nM$). Interpreting circle arcs as real line intervals inside a certain segment $[x_0,x_0+L)$, we obtain

\begin{proposition}\label{prop:finiteness}
For any circle rotation, the first return map onto any finite union of arcs is an interval exchange transformation.
\end{proposition}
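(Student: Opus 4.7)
The plan is to assemble the statement from the three facts that the preceding paragraphs have already essentially established, and then just verify that the resulting combinatorial/metric data matches the paper's definition of an interval exchange.

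First I would invoke the preceding finiteness argument verbatim: in the rational case $\rho=p/q$ one has $R^q=\Id$, so $n(x,R,\Gamma)\le q$; in the irrational case orbits are uniformly dense, so the length of the shortest maximal arc in $\Gamma$ gives a uniform bound $n_0$ on return times. This gives a disjoint decomposition $\Gamma=\bigsqcup_{n=1}^{n_0}\Gamma_n$ with $\Gamma_n=\Gamma\cap R^{-n}(\Gamma)\setminus\bigcup_{i<n}R^{-i}(\Gamma)$. Since finite unions of arcs are closed under finite intersections, complements within one another, and preimages by a rotation, each $\Gamma_n$ is itself a finite union of arcs, and on $\Gamma_n$ the first return map $R_\Gamma$ coincides with $R^n$, i.e., a rigid rotation by $nM$.

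Next I would translate this picture to the real line using the alternative description~(\ref{eq:turn_alt}). Choose $x_0$ as explained in the paragraph preceding the statement (a point outside the interior of $\Gamma$ if $\Gamma\ne\unitcircle$, arbitrary otherwise), so that the maximal arcs of $\Gamma$ project injectively to finitely many disjoint half-open segments $J_1,\dots,J_N\subset[x_0,x_0+L)$. Further subdividing each $J_k$ by the boundary points of the sets $\Gamma_n$ (and, if $\Gamma=\unitcircle$, by the preimages of $x_0$ under the relevant $R^n$), one obtains a finite partition of $\Gamma$ into half-open subintervals, each lying in a single $\Gamma_n$ and hence mapped by $R_\Gamma$ to a single half-open subinterval of some $J_{k'}$ by the corresponding rigid translation. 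These I label by an alphabet $\A$: each label $\alpha$ marks one beginning subinterval $I_{\alpha\be}$ and its image $I_{\alpha\en}=R_\Gamma(I_{\alpha\be})$, which have the same length $v_\alpha>0$.

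Finally I would read off the scheme. On every $J_k$, the beginning intervals $I_{\alpha\be}$ tile $J_k$ from left to right in some order $\alpha_1\be,\dots,\alpha_m\be$, and the ending intervals $I_{\beta\en}$ tile the same $J_k$ from left to right in some order $\beta_1\en,\dots,\beta_n\en$; connecting consecutive endpoints on $J_k$ produces exactly one closed chain corresponding to the two-row cycle $\bigl[\begin{smallmatrix}\alpha_1&\cdots&\alpha_m\\\beta_1&\cdots&\beta_n\end{smallmatrix}\bigr]$, which has zero twist number. Taking the $N$ cycles obtained from $J_1,\dots,J_N$ together defines a positive IRE scheme of twist number $0$, i.e., an interval exchange scheme in the paper's sense, and by construction the corresponding IRE coincides with $R_\Gamma$.

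The main obstacle is purely a bookkeeping one: making sure that the subdivisions of the $J_k$ produced by the sets $\Gamma_n$ and by their images interleave to yield precisely one two-row cycle per $J_k$, rather than spuriously splitting a $J_k$ further or gluing two $J_k$'s together. This reduces to the observation that $R_\Gamma$ is a bijection of $\Gamma$ that preserves Lebesgue measure and respects the half-open convention, so the images of the beginning intervals must exactly repave each $J_{k'}$; no genuinely new analytic content is needed beyond what the preceding paragraphs already provide.
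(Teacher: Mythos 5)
Your proof is correct and takes essentially the same route as the paper: establish a uniform bound on return times (handling rational and irrational rotations separately), decompose $\Gamma$ into the finitely many sets $\Gamma_n$ which are again finite unions of arcs, and observe that $R_\Gamma$ is a rigid translation on each such arc. You supply the bookkeeping — choice of $x_0$, refinement of the $J_k$ by $\Gamma_n$-boundaries, reading off one two-row cycle per $J_k$ — that the paper compresses into its final sentence, but the argument is the same.
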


If a circle rotation was irrational, then the obtained interval exchange is irreducible due to minimality of this rotation as a dynamical system.

\subsection{First return maps are rotational}
\label{subsect:rotationality}

Let us be given an irrational circle rotation~(\ref{eq:turn}), (\ref{eq:turn_alt}) and a finite union of arcs $\Gamma$, which we interpret as a set of pairwise disconnected half-open real line segments contained in $[x_0,x_0+L)$. By Proposition~\ref{prop:finiteness}, the first return map $R_\Gamma:\Gamma\to\Gamma$ is an interval exchange. We will show that the scheme of this interval exchange, seen as an IRE, is rotational by definition, i.e., that its dual IRE scheme is an interval exchange scheme as well.

In order to write down the IRE scheme for the interval exchange $R_\Gamma$, let us denote every interval in this exchange by its own symbol thus forming an alphabet $\A$. The set $\Gamma$ is, on the one hand, a union of all pairwise disconnected intervals $I_{\alpha\be}$, $\alpha\in\A$, and on the other hand it is a union of their (pairwise disconnected as well) images $I_{\alpha\en}=\Gamma(I_{\alpha\be})$, $\alpha\in\A$. Each disconnected segment in $\Gamma$ is also split, on the one hand, into several beginning intervals $I_{\alpha_1\be},\dots,I_{\alpha_m\be}$ (counting from left to right), and on the other hand into several ending intervals $I_{\beta_1\en},\dots,I_{\beta_n}\en$ (from left to right as well); in the scheme $\sigma$ this segment is coded by a cycle $(\alpha_1\be,\dots,\alpha_m\be,\beta_n\en,\dots,\beta_1\en)$, $\{\alpha_i\}_{i=1}^m,\{\beta_j\}_{j=1}^n\subset\A$, $n,m\ge1$. The total array of endpoints $\xect\in\R^{\bar\A}$, where $x_{\alpha\be}$ is the left endpoint of a beginning interval $I_{\alpha\be}$, and $x_{\beta\en}$ is the right endpoint of an ending interval $I_{\beta\en}$, $\alpha,\beta\in\A$, forms a real component of the IRE $(\sigma,\xect)$. 

All the endpoints of a given interval exchange are visibly divided into four types: type L (left) are left endpoints of disconnected segments in $\Gamma$ (in the example above such is the point $x_{\alpha_1\be}$); type R (right) are right endpoints of disconnected segments in $\Gamma$ (in the example above such is $x_{\beta_1\en}$); type MB (middle beginning) are points of connection between neighboring beginning intervals (in the example above such are $x_{\alpha_i\be}$, $i\ne1$); and type ME (middle ending) are points of connection between neighboring ending intervals (in the example above such are $x_{\beta_j\en}$, $j\ne1$). In the IRE view, these types relate not to the real numbers $x_{\bar\xi}$ themselves, but rather to the symbols attached: $\bar\xi\in\bar\A$ is type L, if $\bar\xi=\xi\be$ and $\sigma^{-1}(\bar\xi)=\eta\en$; type R, if $\bar\xi=\xi\en$ and $\sigma^{-1}(\bar\xi)=\eta\be$; type MB, if $\bar\xi=\xi\be$ and $\sigma^{-1}(\bar\xi)=\eta\be$; and type ME, if $\bar\xi=\xi\en$ and $\sigma^{-1}(\bar\xi)=\eta\en$, for some $\xi,\eta\in\A$. 

It is worth mentioning, that every disconnected segment in $\Gamma$ (a cycle in the permutation $\sigma$ corresponds to it) possesses exactly one type L endpoint and exactly one type R endpoint (they are actually its left and right endpoints respectively), while there may be any number of types MB and ME endpoints lying on it, including zero.

Consider an arbitrary type MB endpoint $x_{\xi_1\be}$ (so that $\sigma^{-1}(\xi_1\be)=\eta_1\be$, $\xi_1,\eta_1\in\A$) and follow dynamical trajectories of its left $x_{\xi_1\be}^-=(x_{\xi_1\be}-\eps,x_{\xi_1\be})$, $\eps\to0$, and right $x_{\xi_1\be}^+=(x_{\xi_1\be},x_{\xi_1\be}+\eps)$, $\eps\to0$, infinitesimal half-neighborhoods (it is obvious, that for small enough $\eps>0$, the consecutive images of such half-neighborhoods under action of $R_\Gamma$ do not cover any endpoints during long enough time, in particular they do not split into smaller intervals, therefore this infinitesimal consideration is well-defined). 

The interval $I_{\xi_1\be}$, for which $x_{\xi_1\be}$ is its left endpoint, is mapped by $R_\Gamma$ onto the interval $I_{\xi_1\en}$, and accordingly the right half-neighborhood $x_{\xi_1\be}^+$ is mapped onto a right half-neighborhood of the left endpoint of $I_{\xi_1\en}$, which is $x_{\sigma(\xi_1\en)}^+$. Now there are two cases: the latter endpoint is either type ME (so that $\sigma(\xi_1\en)=\eta_*\en$, $\eta_*\in\A$), or type L (so that $\sigma(\xi_1\en)=\xi_2\be$, $\xi_2\in\A$). In the first case we stop, while in the second case continue to follow the trajectory of the chosen half-neighborhood. Similarly to the initial step, $x_{\sigma(\xi_1\en)}^+=x_{\xi_2\en}^+$ is mapped onto $x_{\sigma(\xi_2\en)}^+$, and we get two cases again: either the endpoint $x_{\sigma(\xi_2\en)}$ is type ME (so that $\sigma(\xi_2\en)=\eta_*\en$, $\eta_*\in\A$), or type L (so that $\sigma(\xi_2\en)=\xi_3\be$, $\xi_3\in\A$). In the first case we stop, and in the second case continue to follow the trajectory. At some step in this algorithm, the process will stop, because the number of type L endpoints is finite, and the map $R_\Gamma$ does not have periodic trajectories due to irrationality of the circle rotation we began with. Therefore, after the stop we will obtain a sequence of symbols $\xi_1\be$, $\xi_2\be=\sigma(\xi_1\be)$, \dots, $\xi_m\be=\sigma(\xi_{m-1}\be)$, $\eta_*\en=\sigma(\xi_m\be)$ and the corresponding sequence of half-neighborhoods $x_{\xi_1\be}^+$, $x_{\xi_2\be}^+=R_\Gamma(x_{\xi_1\be}^+)$, \dots, $x_{\xi_m\be}^+=R_\Gamma(x_{\xi_{m-1}\be}^+)$, $x_{\eta_*\en}^+=R_\Gamma(x_{\xi_m\be}^+)$, where the endpoints $x_{\xi_1\be}$, $x_{\xi_2\be}$, \dots, $x_{\xi_m\be}$, $x_{\eta_*\en}$ are type MB, L, \dots, L, ME respectively; $\xi_1,\dots,\xi_m,\eta_*\in\A$, $m\ge1$.

Now look at the trajectory of the left half-neighborhood of the same starting type MB endpoint. The interval $I_{\eta_1\be}$, for which $x_{\xi_1\be}$ is its right endpoint, is mapped by $R_\Gamma$ onto the interval $I_{\eta_1\en}$, and accordingly the left half-neighborhood $x_{\xi_1\be}^-$ is mapped onto the left half-neighborhood of the right endpoint of $I_{\eta_1\en}$, which is $x_{\eta_1\en}^-$. Two cases here: either the latter endpoint is type ME (so that $\eta_1\en=\sigma(\xi_*\en)$, $\xi_*\in\A$), or type R (so that $\eta_1\en=\sigma(\eta_2\be)$, $\eta_2\in\A$). In the first case we stop; in the second case we continue to follow the trajectory of the chosen half-neighborhood. Similarly to the initial step, the interval $I_{\eta_2\be}$ with the right endpoint $x_{\eta_1\en}$, is mapped onto the interval $I_{\eta_2\en}$, and so $x_{\eta_1\en}^-$ is mapped onto $x_{\eta_2\en}^-$. Again two cases: either $x_{\eta_2\en}$ is type ME (so that $\eta_2\en=\sigma(\xi_*\en)$, $\xi_*\in\A$), or type R (so that $\eta_2\en=\sigma(\eta_3\be)$, $\eta_3\in\A$). In the first case we stop; in the second case we continue. At some step the process will stop, since $R_\Gamma$ has no periodic trajectories, and we will finally obtain a sequence of symbols $\xi_1\be$, $\eta_1\be=\sigma^{-1}(\xi_1\be)$, $\eta_2\be=\sigma^{-1}(\eta_1\en)$, \dots, $\eta_n\be=\sigma^{-1}(\eta_{n-1}\be)$ and the sequence of half-neighborhoods $x_{\xi_1\be}^-$, $x_{\eta_1\en}^-=R_\Gamma(x_{\xi_1\be}^-)$, $x_{\eta_2\en}^-=R_\Gamma(x_{\eta_1\en}^-)$ \dots, $x_{\eta_n\en}^-=R_\Gamma(x_{\eta_{n-1}\en}^-)$, where the endpoints $x_{\xi_1\be}$, $x_{\eta_1\en}$, \dots, $x_{\eta_n\en}$ are type MB, R, \dots, R, ME respectively; $\eta_1,\dots,\eta_n\in\A$, $n\ge1$.

What is left is to see that necessarily $x_{\eta_n\en}=x_{\eta_*\en}$, and therefore $\eta_*=\eta_n$. Whe is this? This is because $R_\Gamma$ is the first return map for a circle rotation to the union of arcs $\Gamma$. A circle rotation $R$ is a continuous map, and so the left and right half-neighborhoods of any point never get separated under its action. Therefore, at the moment when an inner point (such as a type MB endpoint) of the subset $\Gamma$ returns again as an inner point (such as a type ME endpoint) to the subset $\Gamma$ first time, its left and right half-neighborhoods meet each other again, although before that they could at some moments visit the boundary (as type L and R endpoints) of the subset $\Gamma$. Moreover, the total return time (in terms of the number of iterates of $R$) for two half-neighborhoods from their splitting at the point $x_{\xi_1\be}$ till reunion at the point $x_{\eta_n\en}$ is the same for both of them. This means the following. Recall that any beginning interval $I_{\alpha\be}$ of the exchange $R_\Gamma$ is included in a certain set $\Gamma_k$ of points returning to $\Gamma$ exactly after $k$ iterates of the circle rotation $R$, $1\le k\le k_0$; we may denote this time as $k_{\alpha}$, $\alpha\in\A$. Accordingly, for the trajectory of the right half-neighborhood $x_{\xi_1\be}^+$, the total time before it reaches the point $x_{\eta_n\en}$ in terms of iterates of $R$ equals $k_{\xi_1}+\dots+k_{\xi_m}$, and for the left half-neighborhood $x_{\xi_1\be}^-$ this time equals $k_{\eta_1}+\dots+k_{\eta_n}$. From what we shown above, these times are necessarily equal, thus we have $\sum_{i=1}^mk_{\xi_i}=\sum_{j=1}^nk_{\eta_j}$ for any type MB endpoint of an interval exchange $R_\Gamma$.

Now we can explicitly write down an interval exchange with an IRE scheme dual to $\sigma$. The dual scheme $\sigma^*$ consists of all the cycles $(\xi_1\be,\dots,\xi_m\be,\eta_n\en,\dots,\eta_1\en)$ constructed for all the type MB endpoints in the interval exchange $R_\Gamma$. All these cycles are disjoint, since an interval exchange is bijective. On the other hand, each element of $\bar\A$ belongs to one of these cycles, because we counted all type MB endpoints; the number of type ME endpoints is the same as that of MB; and a half-neighborhood of any type L or type R endpoint necessarily hits a type ME endpoint after a number of iterates, therefore we counted all other endpoints as well. It is easy to check that $\sigma^*$ is dual to $\sigma$ by the definition (\ref{eq:duality}), in view of the relations we obtained when investigating trajectories of half-neighborhoods of $x_{\xi_1\be}$. To show that the scheme $\sigma^*$ is positive, let us put $y_{\xi_i\be}=\sum_{s=1}^{i-1}k_{\xi_s}$, $1\le i\le m$, and $y_{\eta_j\en}=\sum_{t=1}^jk_{\eta_t}$, $1\le j\le n$, for every cycle like investigated above; this will determine a vector of endpoints $\yect$ allowed by the scheme $\sigma^*$ due to the equalities $\sum_{i=1}^mk_{\xi_i}=\sum_{j=1}^nk_{\eta_j}$. The corresponding vector of lengths consists of positive (and even integer) components $k_\xi$, $\xi\in\A$, and so an IRE $(\sigma^*,\yect)$ is an interval exchange indeed.

The statement~1 of Theorem~\ref{theorem:main} is proved. 

\begin{remark}
Proving this statement, for an interval exchange induced on a subset of a circle by its rotation, we constructed a dual dynamical system of interval exchange, with intervals of time being exchanged like spacial intervals.
\end{remark}

\section{Proving the second statement of Theorem}
\label{sect:proof_2}

In this Section, we prove the statement~2) of Theorem~\ref{theorem:main} by presenting an algorithm for construction of an irrational circle rotation and a union of arcs such that the corresponding first return map is shift equivalent to the given rotational interval exchange.

Thus, let us be given a rotational interval exchange $(\sigma,\xect)$. Since we are talking about shift equivalent interval exchanges anyway, it will suffice to consider a floating IRE $(\sigma,\vect)$ from the start, i.e., restrict ourselves by considering lengths of intervals and do not mind their endpoints.

To this IRE, we will consecutively apply two types of operations: the first type is a reverse step of induction (this will produce a new dynamical system, for which the initial one is a first return map), while the second type is merging two neighboring intervals into a single one (this will not change the dynamical system at all, but the number of exchanging intervals will decrease).

\subsection{The elementary steps of induction}
\label{subsect:induction}

Here we remind what are the four elementary steps of induction $\Pi_{\alpha\beta}^{\mathrm{rb}}$, $\Pi_{\alpha\beta}^{\mathrm{re}}$, $\Pi_{\alpha\beta}^{\mathrm{lb}}$, and $\Pi_{\alpha\beta}^{\mathrm{le}}$, which we defined in~\cite{Teplinsky23}. They are operations of transformation, which can be applied to IREs (and to interval exchanges, in particular), or separately to there schemes, generalizing the classical steps of Rauzy--Veech induction. According to their action, we call these four steps ``cropping a beginning interval on the right'', ``cropping an ending interval on the right'', ``cropping a beginning interval on the left'', and ``cropping the ending interval on the left'' respectively. The general formulas for the four elementary induction steps are given in Sect.~7 of the work~\cite{Teplinsky23}; it is also explained there that the steps $\Pi_{\alpha\beta}^{\mathrm{rb}}$ and $\Pi_{\alpha\beta}^{\mathrm{re}}$ are applicable to an IRE scheme $\sigma$ under condition $\sigma(\alpha\be)=\beta\en$, while the steps $\Pi_{\alpha\beta}^{\mathrm{lb}}$ and $\Pi_{\alpha\beta}^{\mathrm{le}}$ are applicable under condition $\sigma(\beta\en)=\alpha\be$. In terms of cycles in the permutation $\sigma$, these operations act as follows: the step $\Pi_{\alpha\beta}^{\mathrm{rb}}$ (the steps $\Pi_{\alpha\beta}^{\mathrm{re}}$, $\Pi_{\alpha\beta}^{\mathrm{lb}}$, $\Pi_{\alpha\beta}^{\mathrm{le}}$) moves the element $\beta\en$ (the elements $\alpha\be$, $\beta\en$, $\alpha\be$) from its current position into the position straight before $\alpha\en$ (straight after $\beta\be$, straight after $\alpha\en$, straight before $\beta\be$). In the vector of lengths $\vect$, a single component gets changed: the steps $\Pi_{\alpha\beta}^{\mathrm{rb}}$ and $\Pi_{\alpha\beta}^{\mathrm{lb}}$ subtract the value of $v_\beta$ from the component $v_\alpha$, while the steps $\Pi_{\alpha\beta}^{\mathrm{re}}$ and $\Pi_{\alpha\beta}^{\mathrm{le}}$ subtract the value of $v_\alpha$ from the component $v_\beta$.
The reverse steps $(\Pi_{\alpha\beta}^{\mathrm{rb}})^{-1}$, $(\Pi_{\alpha\beta}^{\mathrm{re}})^{-1}$, $(\Pi_{\alpha\beta}^{\mathrm{lb}})^{-1}$, or $(\Pi_{\alpha\beta}^{\mathrm{le}})^{-1}$, are applicable to an IRE scheme $\sigma$ under conditions $\sigma(\beta\en)=\alpha\en$,  $\sigma(\beta\be)=\alpha\be$,  $\sigma(\alpha\en)=\beta\en$, or $\sigma(\alpha\be)=\beta\be$ respectively. On the lengths: the reverse steps $(\Pi_{\alpha\beta}^{\mathrm{rb}})^{-1}$ and $(\Pi_{\alpha\beta}^{\mathrm{lb}})^{-1}$ add the value of $v_\beta$ to the component $v_\alpha$, while the steps $(\Pi_{\alpha\beta}^{\mathrm{re}})^{-1}$ and $(\Pi_{\alpha\beta}^{\mathrm{le}})^{-1}$ add the value of $v_\alpha$ to the component $v_\beta$.

Next, let us visualize how these steps work on (floating) interval exchanges using two-row notation for cycles in their schemes. The classical Rauzy--Veech induction works effectively the same, though it acts on a single segment and at its right end only, while we consider interval exchanges on multiple segments and apply induction steps at both ends.

To guarantee that an interval exchange stays an interval exchange after applying an induction step or a reverse induction step, we must assure (in addition to the conditions listed above) that all components of the vector of lengths stay positive, and that the twist total of the scheme stays zero (in other words, the twist number of every cycle must stay zero, so that that cycle stays ``two-row'', see the definition in Sect.~2).

Consider the step $\Pi_{\alpha\beta}^{\mathrm{rb}}$ in detail. The condition of its applicability $\sigma(\alpha\be)=\beta\en$ means that the beginning interval $I_{\alpha\be}$ and the ending interval $I_{\beta\en}$ lie at the right edge of a certain segment $J$, i.e., in two-row notation a certain cycle in the scheme has the form $\left[\begin{array}{cc}
\dots & \alpha \\
\dots & \beta                                                                                                                                           \end{array}\right]$.
Since $\Pi_{\alpha\beta}^{\mathrm{rb}}$ subtracts the length $v_\beta$ from the length $v_\alpha$ (cutting of the ending interval $I_{\beta\en}$ from the segment, therefore ``cropping the beginning interval $I_{\alpha\be}$ on the right'' by $v_\beta$), the resulting vector of lengths stays positive if and only if $v_\alpha>v_\beta$, i.e., $I_{\alpha\be}$ is longer than $I_{\beta\en}$. If this condition holds, then the interval $I_{\beta\en}$ cannot be the only ending one on the segment $J$, because the sum of lengths of the ending intervals on a segment is always equal to the sum of lengths of the beginning ones, and therefore some other ending interval $I_{\gamma\en}$ lies on $J$ straight to the left from $I_{\beta\en}$, i.e., $\sigma(\beta\en)=\gamma\en$ for some $\gamma\in\A$. In two-row notation, the scheme is transformed as follows (we show only the cycles engaged in transformation; all other cycles do not change):
$$
\Pi_{\alpha\beta}^{\mathrm{rb}}:
\left[\begin{array}{cc}
\dots & \alpha \\
\dots & \gamma\ \beta                                                                                                                                           \end{array}\right],
\left[\begin{array}{ccc}
& \dots & \\
\dots & \alpha & \dots                                                                                                                                           \end{array}\right]
\mapsto
\left[\begin{array}{cc}
\dots & \alpha \\
\dots & \gamma                                                                                                                                           \end{array}\right],
\left[\begin{array}{ccc}
& \dots & \\
\dots & \alpha\ \beta & \dots                                                                                                                                           \end{array}\right].
$$
(Here the first cycle corresponds to the segment $J$, while the second one corresponds to the segment containing the ending interval $I_{\alpha\en}$; the latter can be the same segment $J$, in which case we have
$$
\Pi_{\alpha\beta}^{\mathrm{rb}}:
\left[\begin{array}{cccc}
& \dots & & \alpha \\
\dots & \alpha & \dots & \gamma\ \beta                                                                                                                                           \end{array}\right]
\mapsto
\left[\begin{array}{cccc}
& \dots & & \alpha \\
\dots & \alpha\ \beta & \dots & \gamma                                                                                                                                           \end{array}\right].
$$
If $\gamma=\alpha$, then the scheme $\sigma$ does not change under the action of $\Pi_{\alpha\beta}^{\mathrm{rb}}$, and only the length $v_\alpha$ does change.) In all cases, under the action of $\Pi_{\alpha\beta}^{\mathrm{rb}}$, the segment $J$ gets cropped on the right by $v_\beta$, which is the length of interval $I_{\beta\en}$. The intervals $I_{\alpha\be}$ and $I_{\alpha\en}$ also get cropped on the right by $v_\beta$, and the interval $I_{\beta\en}$ moves to a new position straight to the right of the (newly cropped by its length) interval $I_{\alpha\en}$. We also see that in all cases all cycles stay untwisted (twist number zero), and this is true for all (straight!) induction steps.

It is easy to see that the dynamical system of interval exchange obtained as a result of the action of $\Pi_{\alpha\beta}^{\mathrm{rb}}$ on $(\sigma,\vect)$ described above is nothing else but the first return map for the starting dynamical system on the union of segments, where the segment $J$ was cropped on the right by $v_\beta$. Indeed, all the points in this union return to it after time~1 except for the points of the interval $I_{\beta\be}$, which return after time~2: first they get to the cut-away interval $I_{\beta\en}$ of the starting dynamical system, and then the map brings them to the interval $I_{\beta\en}$ in the new dynamical system (in the starting system, this interval was the right part of the interval $I_{\alpha\en}$).

The rest three induction steps act similarly, and similarly each their action results in obtaining a first return map for the starting dynamical system to a union of segments, one of which was cropped from the right or from the left. For visualization, let us write down their action on schemes like it was done above for the step $\Pi_{\alpha\beta}^{\mathrm{rb}}$, making a note that the step $\Pi_{\alpha\beta}^{\mathrm{lb}}$ transforms an interval exchange into an interval exchange if and only if $v_\alpha>v_\beta$, while the steps $\Pi_{\alpha\beta}^{\mathrm{re}}$ and $\Pi_{\alpha\beta}^{\mathrm{le}}$ do this if and only if $v_\alpha<v_\beta$:

$$
\Pi_{\alpha\beta}^{\mathrm{re}}:
\left[\begin{array}{cc}
\dots & \gamma\ \alpha \\
\dots & \beta                                                                                                                                           \end{array}\right],
\left[\begin{array}{ccc}
\dots & \beta & \dots \\
& \dots &                                                                                                               \end{array}\right]
\mapsto
\left[\begin{array}{cc}
\dots & \gamma \\
\dots & \beta                                                                                                                                           \end{array}\right],
\left[\begin{array}{ccc}
\dots & \beta\ \alpha & \dots \\
& \dots &                                                                                                                                           \end{array}\right],
$$

$$
\Pi_{\alpha\beta}^{\mathrm{lb}}:
\left[\begin{array}{cc}
\alpha & \dots \\
\beta\ \gamma & \dots                                                                                                                                          \end{array}\right],
\left[\begin{array}{ccc}
& \dots & \\
\dots & \alpha & \dots                                                                                                                                           \end{array}\right]
\mapsto
\left[\begin{array}{cc}
\alpha & \dots\\
\gamma & \dots                                                                                                                                           \end{array}\right],
\left[\begin{array}{ccc}
& \dots & \\
\dots & \beta\ \alpha & \dots                                                                                                                                           \end{array}\right],
$$

$$
\Pi_{\alpha\beta}^{\mathrm{le}}:
\left[\begin{array}{cc}
\alpha\ \gamma & \dots \\
\beta & \dots 
\end{array}\right],
\left[\begin{array}{ccc}
\dots & \beta & \dots\\
& \dots &                                                                                                                                           \end{array}\right]
\mapsto
\left[\begin{array}{cc}
\gamma & \dots \\
\beta & \dots                                                                                                                                           \end{array}\right],
\left[\begin{array}{ccc}
\dots & \alpha\ \beta & \dots\\
& \dots &
\end{array}\right].
$$

Each of these steps decreases the greater one among two lengths $v_\alpha$, $v_\beta$ by the lesser one and does not change all other lengths.

The reverse elementary steps of induction $(\Pi_{\alpha\beta}^{\mathrm{rb}})^{-1}$, $(\Pi_{\alpha\beta}^{\mathrm{re}})^{-1}$, $(\Pi_{\alpha\beta}^{\mathrm{lb}})^{-1}$, or $(\Pi_{\alpha\beta}^{\mathrm{le}})^{-1}$, are applicable to an IRE scheme $\sigma$ under conditions $\sigma(\beta\en)=\alpha\en$, $\sigma(\beta\be)=\alpha\be$, $\sigma(\alpha\en)=\beta\en$, or $\sigma(\alpha\be)=\beta\be$ respectively (see Proposition~3 from~\cite{Teplinsky23}). The scheme obtained from an interval exchange scheme $\sigma$ by applying the reverse induction steps $(\Pi_{\alpha\beta}^{\mathrm{rb}})^{-1}$, $(\Pi_{\alpha\beta}^{\mathrm{re}})^{-1}$, $(\Pi_{\alpha\beta}^{\mathrm{lb}})^{-1}$, or $(\Pi_{\alpha\beta}^{\mathrm{le}})^{-1}$, has zero twist total if and only if there exists an element $\gamma\in\A$ such that $\sigma(\alpha\be)=\gamma\en$, $\sigma^{-1}(\beta\en)=\gamma\be$, $\sigma^{-1}(\alpha\be)=\gamma\en$, or $\sigma(\beta\en)=\gamma\be$ respectively. Since action of the steps $(\Pi_{\alpha\beta}^{\mathrm{rb}})^{-1}$ and $(\Pi_{\alpha\beta}^{\mathrm{lb}})^{-1}$ on the real component of an IRE $\vect$ adds the length $v_\beta$ to $v_\alpha$, while action of the steps $(\Pi_{\alpha\beta}^{\mathrm{re}})^{-1}$ and $(\Pi_{\alpha\beta}^{\mathrm{le}})^{-1}$ adds the length $v_\alpha$ to $v_\beta$, positivity of the starting IRE implies positivity of the resulting IRE, as one of the lengths just increases.

\begin{proposition}
\label{prop:induction-rotation}
An interval exchange obtained from a rotational interval exchange by applying an elementary step of induction to it, is also rotational.
\end{proposition}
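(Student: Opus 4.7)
The plan is to exploit the time-reversal property of duality for Rauzy--Veech induction: if $\sigma'=\Pi(\sigma)$ for a forward elementary step $\Pi$, then there is a specific reverse elementary step $\tilde\Pi^{-1}$ such that $(\sigma')^{*}=\tilde\Pi^{-1}(\sigma^{*})$. Once this correspondence is established, verifying rotationality of $(\sigma',\vect')$ reduces to verifying that $\tilde\Pi^{-1}$ sends the interval exchange scheme $\sigma^{*}$ to another interval exchange scheme.

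That $(\sigma',\vect')$ itself is an interval exchange is immediate from Section~\ref{subsect:induction}: each forward step keeps all cycles two-row (so the twist total stays zero), and the length inequality built into applicability---$v_\alpha>v_\beta$ for the rb, lb steps; $v_\alpha<v_\beta$ for the re, le steps---is precisely what keeps $\vect'$ positive. The dual correspondence I would establish by direct bookkeeping from $\sigma^{*}(\xi\be)=\sigma(\xi\en)$ and $\sigma^{*}(\xi\en)=\sigma(\xi\be)$: each forward step alters exactly three successor values of $\sigma$, and the induced three alterations of $\sigma^{*}$ are exactly those of a single reverse step. For instance, the condition $\sigma(\alpha\be)=\beta\en$ dualizes to $\sigma^{*}(\alpha\en)=\beta\en$, which is the applicability condition of $(\Pi_{\beta\alpha}^{\mathrm{rb}})^{-1}$ on $\sigma^{*}$, and comparing the three affected edges one obtains $(\Pi_{\alpha\beta}^{\mathrm{rb}}(\sigma))^{*}=(\Pi_{\beta\alpha}^{\mathrm{rb}})^{-1}(\sigma^{*})$; the pairings for the other three forward steps are determined analogously.

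Finally, I verify that the matched reverse step preserves the interval exchange property of $\sigma^{*}$. Positivity is automatic: a reverse step only adds a positive length to a single component, and $\sigma^{*}$ admits a positive length vector by rotationality of $\sigma$. Zero twist total of $(\sigma')^{*}$ is the explicit criterion recalled in Section~\ref{subsect:induction}, which, after being unwound through the duality, becomes a structural condition on $\sigma$: in the rb case, the existence of some $\gamma\in\A$ with $\sigma(\beta\en)=\gamma\en$, i.e., the two-row cycle containing $\alpha\be$ and $\beta\en$ has at least two entries in its bottom row. But this is exactly the observation already made in Section~\ref{subsect:induction}: the positivity balance of beginning and ending lengths on a cycle, combined with $v_\alpha>v_\beta$, rules out a one-entry bottom row. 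The remaining three cases reduce by the same mechanism to a one-entry-row exclusion on the appropriate row and cycle. The main (though low-level) obstacle is organizing this four-fold case analysis and confirming in each case that the applicability inequality is precisely what rules out the degenerate configuration that would violate the twist condition on the dual.
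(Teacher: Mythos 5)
Your overall structure agrees with the paper's in two of its three steps: both use Theorem~1 of~\cite{Teplinsky23} to identify the action on the dual scheme with a reverse induction step, and both observe that a reverse step on a positive IRE merely adds one length to another, so positivity of $\sigma^{*}$ is inherited by $(\sigma')^{*}$. The divergence is in the twist argument. The paper settles it in one line by invoking Proposition~6 of~\cite{Teplinsky23} (induction steps preserve the twist total $T(\sigma)+T(\sigma^{*})$); together with positivity of both $\sigma'$ and $(\sigma')^{*}$ this forces $T(\sigma')=T((\sigma')^{*})=0$. You instead unwind, through the duality relations~(\ref{eq:duality}), the explicit criterion recalled in Section~\ref{subsect:induction} for a reverse step to keep a scheme two-row, and then show that the applicability inequality --- e.g.\ $v_\alpha>v_\beta$ in the rb case, which forces the cycle containing $\alpha\be$ and $\beta\en$ to have at least two entries in its ending row, hence $\sigma(\beta\en)=\gamma\en$ for some $\gamma$ --- guarantees that criterion. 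That is a correct, more self-contained route: you re-derive the needed special case of Proposition~6 by hand rather than citing it, at the cost of a four-fold case analysis. One caution as you carry that out: the matched reverse step on $\sigma^{*}$ is not always obtained by swapping subscripts within the same type. You are right that $\Pi_{\alpha\beta}^{\mathrm{rb}}$ on $\sigma$ pairs with $(\Pi_{\beta\alpha}^{\mathrm{rb}})^{-1}$ on $\sigma^{*}$, but $\Pi_{\alpha\beta}^{\mathrm{re}}$ pairs with $(\Pi_{\alpha\beta}^{\mathrm{lb}})^{-1}$ and $\Pi_{\alpha\beta}^{\mathrm{lb}}$ with $(\Pi_{\alpha\beta}^{\mathrm{re}})^{-1}$ (cf.\ the Remark after Proposition~\ref{prop:canon}), so the remaining pairings must be determined by matching the actual edge relocations, not just the applicability conditions.
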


\begin{proof}
According to Proposition~6 from~\cite{Teplinsky23}, induction steps do not change the twist total of a scheme, therefore in this case it stays zero. According to Theorem~1 from~\cite{Teplinsky23}, while an IRE scheme is transformed under the action of an elementary step of induction, its dual scheme transforms under the action of a certain reverse step. And since the action of a reverse induction step on a positive IRE just increases one of the lengths by the value of another one, the dual scheme stays positive. Proposition is proved.
\end{proof}

The similar statement regarding reverse induction steps is, in general, false, as we show by the following
{\bf counterexample}.

Consider the scheme $\sigma=
\left\{\left[\begin{array}{ccc}
\gamma & \alpha & \delta\\
\delta & \beta &                                                                                                                                           \end{array}\right],\left[\begin{array}{cc}
\beta & \\
\alpha & \gamma                                                                                                                                           \end{array}\right]\right\}$. It has zero twist number and is positive: for example, the vector of lengths
$\vect=(v_\alpha,v_\beta,v_\gamma,v_\delta)=(1,2,1,1)$ is allowed. The dual scheme
$\sigma^*=
\left\{\left[\begin{array}{ccc}
\alpha & \beta\\
\gamma & \beta & \delta                                                                                                                                           \end{array}\right],\left[\begin{array}{cc}
\delta & \gamma \\
\alpha &                                                                                                                                           \end{array}\right]\right\}$
is also untwisted and positive: for example, the vector of lengths $\wect=(w_\alpha,w_\beta,w_\gamma,w_\delta)=(2,1,1,1)$ is allowed. Therefore, the IRE $(\sigma,\vect)$ is a rotational interval exchange. The reverse step of induction $(\Pi_{\gamma\alpha}^{\mathrm{le}})^{-1}$ is applicable, and the obtained scheme $\sigma'=
\left\{\left[\begin{array}{cc}
\alpha & \delta\\
\delta & \beta                                                                                                                                          \end{array}\right],\left[\begin{array}{cc}
\gamma & \beta \\
\alpha & \gamma                                                                                                                                           \end{array}\right]\right\}$ is again untwisted and positive, as the vector of lengths $\vect=(1,2,1,1)$ is transformed into $\vect'=(2,2,1,1)$. Therefore, the obtained IRE $(\sigma',\vect')=(\Pi_{\gamma\alpha}^{\mathrm{le}})^{-1}(\sigma,\vect)$ is an interval exchange. But it is not rotational: the dual to $\sigma'$ scheme $(\sigma')^*=\Pi_{\alpha\gamma}^{\mathrm{le}}\sigma^*=
\left\{\left[\begin{array}{ccc}
\beta & & \\
\gamma & \beta & \delta                                                                                                                                           \end{array}\right],\left[\begin{array}{ccc}
\delta & \alpha & \gamma \\
\alpha & &                                                                                                                                           \end{array}\right]\right\}$ still has zero twist number, but it is obviously not positive, since vectors of lengths $\uect=(u_\alpha,u_\beta,u_\gamma,u_\delta)$ allowed by it are determined by the condition $u_\gamma+u_\delta=0$.

\subsection{The operation of merging intervals}

If in an interval exchange there are two neighboring intervals, which are shifted by the same distance, then it is natural to merge them into a single interval, and the dynamical system will not change. Such a situation takes place when $\sigma(\alpha\be)=\beta\be$ and $\sigma(\beta\en)=\alpha\en$ for some $\alpha\ne\beta$. Define the operation of \emph{merging intervals} $\Sigma_{\alpha\beta}$ that is applicable to an IRE $(\sigma,\vect)$ under condition mentioned above and acts in the following way: the symbol $\beta$ is removed from the alphabet $\A$, the elements $\beta\be$ and $\beta\en$ are removed from the cycles they belonged to, and the length $v_\alpha$ increases by the length $v_\beta$.

Formally, $\Sigma_{\alpha\beta}(\sigma,\vect)=(\sigma',\vect')$, where $\sigma'$ is a permutation on the reduced double alphabet $\bar\A'=\A'\times\{\be,\en\}$, $\A'=\A\backslash\{\beta\}$, given by the following equalities: in the case $\sigma(\beta\be)\ne\beta\en$, they are $\sigma'(\alpha\be)=\sigma(\beta\be)$, $\sigma'(\sigma^{-1}(\beta\en))=\alpha\en$, and $\sigma'(\bar\xi)=\sigma(\bar\xi)$ for $\bar\xi\in\bar\A'\backslash\{\alpha\be,\sigma^{-1}(\beta\en)\}$, while in the case $\sigma(\beta\be)=\beta\en$ they are $\sigma(\alpha\be)=\alpha\en$, and $\sigma'(\bar\xi)=\sigma(\bar\xi)$ for $\bar\xi\in\bar\A'\backslash\{\alpha\be\}$; the new vector of lengths $\vect'\in\R^{\A'}$ is given by $v'_\alpha=v_\alpha+v_\beta$, and $v'_\xi=v_\xi$ for $\xi\in\A'\backslash\{\alpha\}$.

Notice, that for a rotational scheme $\sigma$ the case $\sigma(\beta\be)=\beta\en$ is not possible, because this equality implies $\sigma^*(\beta\en)=\beta\en$ for the dual scheme $\sigma^*$, which makes the latter non-positive: every vector of lengths $\wect$ allowed by the scheme $\sigma^*$ contains a component $w_\beta=0$.

The condition $\sigma(\alpha\be)=\beta\be$, $\sigma(\beta\en)=\alpha\en$ for applicability of $\Sigma_{\alpha\beta}$ to $\sigma$ is equivalent to the condition $\sigma^*(\alpha\en)=\beta\be$, $\sigma^*(\beta\be)=\alpha\en$ on the dual scheme $\sigma^*$. This means that $\sigma^*$ contains the two-element cycle $\left[\begin{array}{c}
\beta\\
\alpha \end{array}\right]$. Applying the operation of merging intervals $\Sigma_{\alpha\beta}$ to $\sigma$ removes this two-element cycle from $\sigma^*$ and replaces $\beta\en$ by $\alpha\en$ in the cycle in $\sigma^*$, which contained that element $\beta\en$.

\begin{proposition}
The operation of merging intervals, applied to a rotational interval exchange, leaves it rotational.
\end{proposition}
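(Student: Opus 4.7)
The plan is to check the four conditions that together define "rotational" for $(\sigma',\vect')$: (i) $\sigma'$ has zero twist number, (ii) $\sigma'$ is positive (realised by $\vect'$), (iii) the dual $(\sigma')^*$ has zero twist number, and (iv) $(\sigma')^*$ is positive. The preceding paragraph already gives an explicit description of $(\sigma')^*$: it is obtained from $\sigma^*$ by deleting the two-element cycle $\left[\begin{array}{c}\beta\\ \alpha\end{array}\right]$ and replacing the entry $\beta\en$ by $\alpha\en$ in whichever other cycle of $\sigma^*$ contained it. This description will carry most of the argument.

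For (i) and (ii): the applicability hypotheses $\sigma(\alpha\be)=\beta\be$ and $\sigma(\beta\en)=\alpha\en$ translate, in two-row notation, to saying that $\alpha$ is immediately followed by $\beta$ in the top row of some cycle of $\sigma$, and $\alpha$ is immediately followed by $\beta$ in the bottom row of some (possibly the same) cycle. Deleting the $\beta$-entry from each row preserves the two-row format, so every cycle of $\sigma'$ still has zero twist, i.e.\ $T(\sigma')=0$. Positivity follows from $v'_\alpha=v_\alpha+v_\beta>0$ and $v'_\gamma=v_\gamma>0$ for $\gamma\ne\alpha$, together with Proposition~\ref{prop:v=v}: in each cycle of $\sigma'$, both the beginning-side and ending-side sums coincide with the corresponding sums for the original cycle of $\sigma$, since the two consecutive summands $v_\alpha+v_\beta$ are merged into the single summand $v'_\alpha$ of equal value.

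For (iii) and (iv): by the description recalled above, $(\sigma')^*$ differs from $\sigma^*$ only by removal of a two-row cycle and substitution of one ending entry by another ending entry. Endings remain endings, so the two-row format of every surviving cycle is intact and $T((\sigma')^*)=0$. For positivity, fix any positive $\wect$ allowed by $\sigma^*$ (one exists because $\sigma$ is rotational). Proposition~\ref{prop:v=v} applied to the two-element cycle $\left[\begin{array}{c}\beta\\ \alpha\end{array}\right]$ of $\sigma^*$ yields the identity $w_\beta=w_\alpha$. Define $\wect'\in\R^{\A'}$ by $w'_\gamma=w_\gamma$ for $\gamma\in\A\backslash\{\beta\}$; this vector is positive. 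On every cycle of $(\sigma')^*$ inherited unchanged from $\sigma^*$, the balance~(\ref{eq:v=v}) is inherited. On the unique modified cycle, the ending-side sum trades the term $w_\beta$ for $w'_\alpha=w_\alpha=w_\beta$, leaving it unchanged, while the beginning-side sum is untouched. Hence $\wect'$ is allowed by $(\sigma')^*$, and the dual scheme is positive.

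The only non-cosmetic obstacle is step~(iv): the natural guess of restricting $\wect$ to $\A'$ works only because of the relation $w_\beta=w_\alpha$, which must survive the replacement of $\beta\en$ by $\alpha\en$ in the modified cycle. That relation is supplied, free of charge, by the very two-element cycle of $\sigma^*$ whose existence is the structural pay-off of the applicability conditions $\sigma(\alpha\be)=\beta\be$, $\sigma(\beta\en)=\alpha\en$; everything else is scheme-level bookkeeping and a direct appeal to Proposition~\ref{prop:v=v}.
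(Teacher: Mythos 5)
Your proof is correct and takes essentially the same route as the paper's: twist numbers are unaffected because the modification at the scheme level replaces like-by-like, and positivity of the dual follows by restricting an allowed positive $\wect$ for $\sigma^*$ to $\A'$, using the relation $w_\alpha=w_\beta$ forced by the two-element cycle $\left[\begin{array}{c}\beta\\\alpha\end{array}\right]$ of $\sigma^*$. You spell out the Proposition~\ref{prop:v=v} bookkeeping in slightly more detail than the paper (which helps when the begin-pair $\alpha\be,\beta\be$ and the end-pair $\beta\en,\alpha\en$ sit in different cycles of $\sigma$), but the underlying argument is the same.
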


\begin{proof}
It is clear from the definition of this operation that the twist numbers of cycles do not change, though one (untwisted, as all of them) cycle in the dual scheme completely disappears; therefore the twist total stays zero.
One of the lengths increases by the value of another one, therefore the vector of lengths stays positive. For the dual scheme, before applying the operation, there existed an allowed positive vector of lengths $\wect$ satisfying $w_\alpha=w_\beta$, therefore the vector $\wect'$ with the same components (only with the component $w_\beta$ removed) is obviously positive and allowed by the scheme that is dual to the one obtained by applying the operation $\Sigma_{\alpha\beta}$. Proposition is proved.
\end{proof}

\subsection{The idea of algorithm}

Let us describe the algorithm of consecutive transformation of an arbitrary irreducible rotational interval exchange, which will lead eventually to construction of a first return map we look for in the statement~2) of Theorem~\ref{theorem:main}. It will be convenient for us to operate with dual schemes. The idea is the following: applying consecutively elementary steps of induction to one of the cycles in the dual scheme, we eventually reduce it to two-element, and then remove it from consideration by merging corresponding intervals. After carrying out this procedure for every cycle of the dual scheme, one by one, we eventually obtain a dual scheme that consists of a single cycle. At that point the algorithm will stop; we will analyze the resulting interval exchange (it will be very special) and show how to construct a circle rotation and a union of arcs, for which this interval exchange will be the first return map. 

It will be important to keep the dual scheme positive after every new transformation (i.e., do not allow the effect described in the counterexample at the end of Subsect.~\ref{subsect:induction} to happen); in this case, by Proposition~\ref{prop:induction-rotation}, that scheme will always stay rotational, and therefore the interval exchange we are transforming will stay rotational as well. 

Two lemmas are needed.

\subsection{Lemma on unsplittability}

Let us call an interval exchange scheme $\sigma$ \emph{splittable} (at the cycle $c_0$), if its alphabet $\A$ can be split into two non-empty sub-alphabets $\A_1$ and $\A_2$ ($\A_1\cup\A_2=\A$, $\A_1\ne\varnothing$, $\A_2\ne\varnothing$) in such a way that the following property holds: one of the cycles $c_0=(\bar\xi,\sigma(\bar\xi)),\dots,\sigma^k(\bar\xi))$, $\bar\xi\in\bar\A$, $k\ge1$, $\sigma^{k+1}(\bar\xi)=\bar\xi$, in the scheme $\sigma=\{c_0,c_1,\dots,c_n\}$, $n\ge0$, can be split into two non-empty arcs $a_1=(\bar\xi,\sigma(\bar\xi),\dots,\sigma^i(\bar\xi))$ and $a_2=(\sigma^{i+1}(\bar\xi),\dots,\sigma^k(\bar\xi))$, $0\le i<k$, and the rest of the cycles can be split into two sets $\tau_1=\{c_1,\dots,c_j\}$ and $\tau_2=\{c_{j+1},\dots,c_n\}$, $0\le j\le n$, such that all the elements of $a_1$ and cycles from $\tau_1$ belong to $\bar\A_1=\A_1\times\{\be,\en\}$, while all the elements of $a_2$ and cycles from $\tau_2$ belong to $\bar\A_2=\A_2\times\{\be,\en\}$. Otherwise, a scheme is called \emph{unsplittable}. 

It is easy to show that for an interval exchange with a splittable scheme, the sum of lengths of all the beginning intervals in the arc $a_1$ is equal to the sum of lengths of all the ending intervals in the arc $a_1$, and the same is true for the intervals in the arc $a_2$. This follows from the observation that such a property (the sum of lengths of all beginning intervals from some set equals the sum of lengths of all ending intervals from that set) holds for every particular cycle by Proposition~\ref{prop:v=v}, and therefore for the set of all intervals indexed by the elements of the cycles from $\tau_1$, on one hand; it also holds for the set of all intervals indexed by the elements of $\bar\A_1$, on the other hand, and these two sets differ exactly by the set of all intervals in the arc $a_1$. 

In particular, the property showed above implies that any of the arcs $a_1$ or $a_2$ in a splittable interval exchange scheme $\sigma$ cannot consist of beginning elements only or ending elements only, or else the sum of corresponding allowed lengths would be zero, and therefore the scheme would not be positive. Since the cycle $c_0$ has zero twist, then necessarily one of these arcs starts with an ending element and ends with a beginning element, while another one starts with a beginning element and ends with an ending element. Assume for definiteness that the first arc is $a_1$, and the second one is $a_2$, then we have $\bar\xi=\alpha\en$, $\sigma^i(\bar\xi))=\beta\be$, $\sigma^{i+1}(\bar\xi)=\gamma\be$, $\sigma^k(\bar\xi)=\delta\en$ for some $\alpha,\beta\in\A_1$ and $\delta,\gamma\in\A_2$. 

The splittability of a scheme can be seen most visually if one considers a corresponding floating interval exchange $(\sigma,\vect)$: it simply means that its segments can be fixed in positions such that every interval of thus obtained fixed interval exchange $(\sigma,\xect)$ is wholly contained either in the half-line $(-\infty,0)$, or in the half-line $[0,+\infty)$; the labels of all the intervals in the first set belong to $\A_1$, while the labels of all the intervals in the second set belong to $\A_2$; and exactly for one segment (that corresponds to the cycle $c_0$) the origin is its inner point, and $x_{\alpha\en}=x_{\gamma\be}=0$ for  $\alpha$ and $\gamma$ determined in the previous paragraph. Effectively, the whole set of intervals in $(\sigma,\xect)$ is split by the point zero into two arrays: the intervals lying to the left from zero are indexed by the elements of $\bar\A_1$, while the intervals to the right from zero are indexed by the elements of $\bar\A_2$.

\begin{lemma}\label{lemma:split}
If an interval exchange scheme is splittable, then it cannot be rotational.
\end{lemma}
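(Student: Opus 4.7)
My plan is to argue by contradiction. Assume the interval exchange scheme $\sigma$ is splittable and, nevertheless, $\sigma^*$ is also a (positive) interval exchange scheme. Retain the notation established in the paragraphs preceding the lemma: the splittable cycle $c_0=a_1\cdot a_2$ has arc $a_1$ running from $\alpha\en$ to $\beta\be$ with labels in $\A_1$, and $a_2$ running from $\gamma\be$ to $\delta\en$ with labels in $\A_2$; the remaining cycles of $\sigma$ are partitioned into families $\tau_1$ (labels in $\A_1$) and $\tau_2$ (labels in $\A_2$).

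The first step is to describe how $\sigma^*$ interacts with the partition $\bar\A=\bar\A_1\cup\bar\A_2$. Writing $\sigma^*(\bar\eta)=\sigma(\bar\eta')$, where $\bar\eta'$ has the same label as $\bar\eta$ but the opposite b/e-type, note that if $\bar\eta\in\bar\A_1$ then $\bar\eta'\in\bar\A_1$ and therefore lies in some cycle of $\tau_1$ or inside the arc $a_1$ (since no $\A_1$-label appears in $a_2$ or in $\tau_2$). A direct case check shows that $\sigma(\bar\eta')$ stays in $\bar\A_1$ except in the single case $\bar\eta'=\beta\be$, i.e.\ $\bar\eta=\beta\en$, for which $\sigma^*(\beta\en)=\gamma\be\in\bar\A_2$. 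By the symmetric argument on $\bar\A_2$, the only $\sigma^*$-arrow out of $\bar\A_2$ is $\delta\be\to\alpha\en$. It follows that the cycle decomposition of $\sigma^*$ contains exactly one ``big'' cycle $c^*_0$ visiting both halves: its restriction to $\bar\A_1$ is a single $\sigma^*$-orbit segment $P_1$ going from $\alpha\en$ to $\beta\en$, its restriction to $\bar\A_2$ is a segment $P_2$ going from $\gamma\be$ to $\delta\be$, while every other cycle of $\sigma^*$ lies entirely within $\bar\A_1$ or entirely within $\bar\A_2$.

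Next I would use the hypothesis that $\sigma^*$ is an interval exchange scheme to identify $c^*_0$'s unique be-en transition. The crossing arrow $\delta\be\to\alpha\en$ is itself a be-en transition and it lies on $c^*_0$; since a two-row cycle carries exactly one such transition, this is the only be-en transition on $c^*_0$. In particular, the segment $P_1$ contains no internal be-en transition. But $P_1$ starts and ends at ``en'' elements ($\alpha\en$ and $\beta\en$), and since the only transitions out of a ``be'' element are be-be (staying in ``be'') or be-en (not allowed inside $P_1$), no ``be'' element of $\bar\A_1$ can appear on $P_1$ at all: the entire segment $P_1$ consists of ``en'' elements.

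The contradiction then follows from Proposition~\ref{prop:v=v}. Let $\wect>0$ be any vector of lengths allowed by $\sigma^*$, which exists by the assumed positivity. Since $P_1$ contains no ``be'' element, all $|\A_1|$ elements $\{\xi\be:\xi\in\A_1\}$ lie on small cycles of $\sigma^*$ contained in $\bar\A_1$. Summing the equality of Proposition~\ref{prop:v=v} over all such small cycles yields
\[
\sum_{\xi\in\A_1}w_\xi \;=\; \sum_{\xi\in\A_1,\ \xi\en\notin P_1}w_\xi,
\]
so $\sum_{\xi\,:\,\xi\en\in P_1}w_\xi=0$; yet $\alpha\en\in P_1$ contributes the strictly positive term $w_\alpha$, a contradiction. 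The main difficulty I anticipate is the structural Step~1, namely the assertion that there is exactly one boundary-crossing arrow of $\sigma^*$ in each direction and the resulting description of the cycle decomposition of $\sigma^*$; once that is in hand, identifying $c^*_0$'s unique be-en transition and applying Proposition~\ref{prop:v=v} is a short calculation.
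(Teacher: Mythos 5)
Your proposal is correct and takes essentially the same route as the paper's proof: you identify the two unique $\sigma^*$-crossings ($\beta\en\to\gamma\be$ and $\delta\be\to\alpha\en$), deduce that the single big cycle of $\sigma^*$ restricts to an $\bar\A_1$-arc from $\alpha\en$ to $\beta\en$ that, by the zero-twist constraint, must consist of ending elements only, and then apply Proposition~\ref{prop:v=v} to derive $\sum_{\xi:\xi\en\in P_1}w_\xi=0$, contradicting positivity via $w_\alpha>0$. The algebra in your final step (summing over the small $\bar\A_1$-cycles) is just a rearrangement of the paper's subtraction $\sum_{\bar\A_1}-\sum_{\tau'_1}$; the content is identical.
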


\begin{proof}
Due to the above mentioned properties of a splittable scheme $\sigma$, the sets of elements $\bar\A_1$ and $\bar\A_2$ are connected in it exactly in two places, namely: there exist elements $\alpha,\beta\in\A_1$ and $\delta,\gamma\in\A_2$ such that $\sigma(\beta\be)=\gamma\be$, $\sigma(\delta\en)=\alpha\en$, while for all $\bar\xi\in\bar\A_1\backslash\{\beta\be\}$ and for all $\bar\eta\in\bar\A_2\backslash\{\delta\en\}$ we have $\sigma(\bar\xi)\in\bar\A_1$ and $\sigma(\bar\eta)\in\bar\A_2$. A corresponding property holds for the dual scheme $\sigma^*$ as well, namely: $\sigma^*(\beta\en)=\gamma\be$, $\sigma^*(\delta\be)=\alpha\en$, while for all $\bar\xi\in\bar\A_1\backslash\{\beta\en\}$ and for all $\bar\eta\in\bar\A_2\backslash\{\delta\be\}$ we have $\sigma^*(\bar\xi)\in\bar\A_1$ and $\sigma^*(\bar\eta)\in\bar\A_2$. 
Therefore, one of the cycles in the scheme $\sigma^*$ can be split into two non-empty arcs $a'_1=(\alpha\en,\dots,\beta\en)$ and $a'_2=(\gamma\be,\dots,\delta\be)$, and the rest of cycles can be split into two sets $\tau'_1$ and $\tau'_2$ such that all the elements of $a'_1$ and cycles from $\tau'_1$ belong to $\bar\A_1$, while all the elements of $a'_2$ and cycles from $\tau'_2$ belong to $\bar\A_2$.
Let $\wect=(w_\xi)_{\xi\in\A}$ be a vector of lengths allowed by the scheme $\sigma^*$. The sum of lengths of all the beginning intervals in any cycle equals the sum of lengths of all the ending intervals in that cycle, and the same is obviously true for a set of cycles. Therefore, we have the equality
$
\sum_{\xi:\,\xi\be\in\tau'_1}w_\xi=\sum_{\xi:\,\xi\en\in\tau'_1}w_\xi
$ 
(a little bit abusive, but visual notation $\xi\be\in\tau'_1$ here means that the element $\xi\be$ belongs to one of the cycles from the set $\tau'_1$, and similarly for $\xi\en$). Since $\sum_{\xi:\,\xi\be\in\bar\A_1}w_\xi=\sum_{\xi\in\A_1}w_\xi=\sum_{\xi:\,\xi\en\in\bar\A_1}w_\xi$, and the set of all elements of cycles from $\tau'_1$ differs from $\bar\A_1$ exactly by the set of all elements of the arc $a'_1$, by subtracting the former equality from the latter one we obtain $\sum_{\xi:\,\xi\be\in a'_1}w_\xi=\sum_{\xi:\,\xi\en\in a'_1}w_\xi$.

If we assume that the splittable interval exchange scheme $\sigma$ is rotational, then the dual scheme $\sigma^*$ must by positive and untwisted. But if all the cycles in $\sigma^*$ have zero twist,then the arc $a'_1$ that we determined above contains ending elements only, therefore $\sum_{\xi:\,\xi\en\in a'_1}w_\xi=0$, and the scheme $\sigma^*$ is not positive.
 
Lemma is proved.
\end{proof}

\subsection{Lemma on unequal lengths}

Given an IRE scheme $\sigma$, we say that the lengths $v_\alpha$ and $v_\beta$, $\alpha,\beta\in\A$, are \emph{equal with necessity}, if for every allowed vector of lengths $\vect$ the equality $v_\alpha=v_\beta$ holds.
It is clear that all relations between the lengths are determined by the equalities (\ref{eq:v=v}), but if the scheme is complex enough, then equality with necessity for a certain pair of lengths can be non-obvious.

Consider a situation when, for some interval exchange, the beginning interval $I_{\alpha\be}$ and the ending interval $I_{\beta\en}$ are both lying by the left or by the right end of the same segment $J$. In two-row notation, the scheme contains a cycle $c_0$, which has the form $\left[\begin{array}{cc}
\alpha & \dots\\
\beta & \dots                                                                                                                                          \end{array}\right]$ or $\left[\begin{array}{cc}
\dots & \alpha\\
\dots & \beta                                                                                                                                          \end{array}\right]$, i.e., $\sigma(\beta\en)=\alpha\be$ or $\sigma(\alpha\be)=\beta\en$ respectively, $\alpha,\beta\in\A$, and this cycle is not two-element. In general case, the lengths $v_\alpha$ and $v_\beta$ can be equal with necessity, but not in the case of a rotational interval exchange, as shows the next statement.

\begin{lemma}\label{lemma:equal}
If for an interval exchange scheme $\sigma$ we have $\sigma(\beta\en)=\alpha\be$ or $\sigma(\alpha\be)=\beta\en$ for some $\alpha,\beta\in\A$, and the corresponding cycle $c_0$ is not two-element, but the lengths $v_\alpha$ and $v_\beta$ are equal with necessity, then this scheme cannot be rotational.
\end{lemma}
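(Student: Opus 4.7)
The plan is to argue by contradiction: assuming $\sigma$ rotational with $v_\alpha,v_\beta$ equal with necessity and $c_0$ not two-element, I will exhibit $\sigma$ itself as splittable, which contradicts Lemma~\ref{lemma:split}. By symmetry I treat only the case $\sigma(\alpha\be)=\beta\en$, writing $c_0=[\alpha_1\dots\alpha_m\mid\beta_1\dots\beta_n]$ in two-row form with $\alpha_m=\alpha$, $\beta_n=\beta$, and $m+n\ge 3$. The condition ``$v_\alpha=v_\beta$ with necessity'' means $v_\alpha-v_\beta$ lies in the span of the cycle-sum relations~(\ref{eq:v=v}); so there exist scalars $\lambda_c$, one per cycle $c$ of $\sigma$, satisfying $\lambda_{c(\xi\be)}-\lambda_{c(\xi\en)}=\delta_{\xi,\alpha}-\delta_{\xi,\beta}$ for every $\xi\in\A$, and after normalizing $\lambda_{c_0}=1$ one obtains $\lambda_{c(\alpha\en)}=\lambda_{c(\beta\be)}=0$ and $\lambda_{c(\xi\be)}=\lambda_{c(\xi\en)}$ whenever $\xi\ne\alpha,\beta$.

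The main obstacle is then to promote these values into the dichotomy $\lambda\in\{0,1\}$. I build the auxiliary graph with one vertex per cycle of $\sigma$ and one edge per label $\xi\in\A$ joining $c(\xi\be)$ to $c(\xi\en)$: irreducibility of $\sigma$ makes this graph connected, while the relations above show that $\lambda$ is constant on each component of the subgraph obtained by deleting the two edges labelled $\alpha$ and $\beta$. Only those deleted edges can mediate a $\pm 1$ jump, so $c_0$ cannot share a component with $c(\alpha\en)$ or $c(\beta\be)$; any further component not containing one of $c_0,c(\alpha\en),c(\beta\be)$ would be isolated in the full graph, violating connectedness. This leaves at most three components and forces $\lambda\in\{0,1\}$, partitioning the cycles of $\sigma$ into $\mathcal{C}_1\sqcup\mathcal{C}_0$ with $c_0\in\mathcal{C}_1$ and $c(\alpha\en),c(\beta\be)\in\mathcal{C}_0$.

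The subcases $m=1$ and $n=1$ are dispatched directly by positivity: if $m=1$, the cycle-sum of $c_0$ reads $v_\alpha=v_{\beta_1}+\dots+v_{\beta_n}$, which together with $v_\alpha=v_{\beta_n}=v_\beta$ forces $v_{\beta_1}+\dots+v_{\beta_{n-1}}=0$, impossible in a positive scheme with $n\ge 2$; the case $n=1$ is symmetric. Assuming henceforth $m,n\ge 2$, set $\A_2=\{\alpha,\beta\}\cup\{\xi\ne\alpha,\beta:\lambda_{c(\xi\be)}=0\}$ and $\A_1=\A\setminus\A_2$. Every top-row entry of $c_0$ other than $\alpha_m=\alpha$ and every bottom-row entry other than $\beta_n=\beta$ must differ from both $\alpha$ and $\beta$ (otherwise $\beta\be$ or $\alpha\en$ would appear in $c_0\in\mathcal{C}_1$, contradicting its $\mathcal{C}_0$-membership) and carries $\lambda=1$, so it lies in $\A_1$. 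Therefore $c_0$ splits cyclically into the $\bar\A_2$-arc $(\alpha\be,\beta\en)$ (adjacent by $\sigma(\alpha\be)=\beta\en$) and the $\bar\A_1$-arc formed by the remaining $m+n-2\ge 2$ elements; by the choice $\alpha,\beta\in\A_2$ every cycle in $\mathcal{C}_0$ lies entirely in $\bar\A_2$ and every cycle in $\mathcal{C}_1\setminus\{c_0\}$ entirely in $\bar\A_1$, while both $\A_1$ (nonempty as $m,n\ge 2$) and $\A_2$ are nonempty. This exhibits $\sigma$ as splittable at $c_0$, so Lemma~\ref{lemma:split} closes the contradiction.
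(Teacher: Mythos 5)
Your proof pursues the same overall plan as the paper --- show $\sigma$ is splittable at $c_0$ and invoke Lemma~\ref{lemma:split} --- but you establish splittability by a genuinely different mechanism. The paper's proof is combinatorial/orbit-chasing: starting from $c(\beta\be)$ it greedily collects cycles into a set $\tau_1$ (adding $c(\gamma\be)$ whenever $\gamma\en$ has already been collected and $\gamma\ne\alpha$), rules out loops back to $\beta$ by a ``bump one coordinate'' perturbation, and then uses positivity (Proposition~\ref{prop:v=v}) to pin down the begin/end label sets of $\tau_1$ and extract the sub-alphabets. Your proof is linear-algebraic: you interpret ``equal with necessity'' as $e_\alpha-e_\beta$ lying in the span of the cycle-sum functionals, obtain coefficients $\lambda_c$, and then use a cycle graph and connectedness to force $\lambda\in\{0,1\}$, reading off the two sub-alphabets from the level sets of $\lambda$. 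This is a clean reformulation that makes the $\{0,1\}$ dichotomy transparent and avoids the orbit-chasing; it is a legitimate alternative to what the paper does.

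There are, however, two gaps you need to close. \emph{First}, you never treat the case $\alpha=\beta$, and in fact your derivation breaks there. You claim ``after normalizing $\lambda_{c_0}=1$ one obtains $\lambda_{c(\alpha\en)}=\lambda_{c(\beta\be)}=0$,'' but this rests on $\delta_{\xi,\alpha}-\delta_{\xi,\beta}=1$ at $\xi=\alpha$, which requires $\alpha\ne\beta$. If $\alpha=\beta$ then $\sigma(\alpha\be)=\alpha\en$, $c(\alpha\en)=c(\beta\be)=c_0$, every constraint on $\lambda$ becomes $\lambda_{c(\xi\be)}=\lambda_{c(\xi\en)}$, and connectedness forces all $\lambda_c=1$; in particular the assertion $c(\alpha\en),c(\beta\be)\in\mathcal{C}_0$ is false, and the later sentence ``(otherwise $\beta\be$ or $\alpha\en$ would appear in $c_0\in\mathcal{C}_1$, contradicting its $\mathcal{C}_0$-membership)'' is then an empty contradiction. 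The paper dispatches $\alpha=\beta$ by a one-line observation (splittable with $\A_1=\{\alpha\}$, $\A_2=\A\setminus\{\alpha\}$); you need a similar separate line, or you can note that $\sigma(\alpha\be)=\alpha\en$ forces $\sigma^*(\alpha\en)=\alpha\en$, a one-element cycle, so $\sigma^*$ is not an interval exchange scheme and $\sigma$ is not rotational immediately. \emph{Second}, your connectedness step leans on irreducibility of $\sigma$, but the lemma is stated for an arbitrary interval exchange scheme and the paper's proof does not assume irreducibility. The reduction to the irreducible case is routine (restrict to the irreducible component containing $c_0$; it also contains $c(\alpha\en)$ and $c(\beta\be)$, and a split there extends to a split of $\sigma$), but you should say it, since otherwise the assertion that the cycle graph is connected is unjustified.
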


\begin{proof}
We will consider the case of $\sigma(\beta\en)=\alpha\be$ (for the case of $\sigma(\alpha\be)=\beta\en$ the proof is similar). Effectively, we will prove that under listed above conditions the scheme $\sigma$ is splittable, and then according to Lemma~\ref{lemma:split} it cannot be rotational. Let us start noticing that in the case of $\alpha=\beta$ the scheme $\sigma$ is obviously splittable (at the cycle $c_0$, with sub-alphabets $\A_1=\{\alpha\}$ and $\A_2=\A\backslash\{\alpha\}$), so for the rest of this proof we assume that $\alpha\ne\beta$.

Let $\vect$ be a positive vector of lengths allowed by the scheme $\sigma$. If there exists a (looped) sequence of distinct from $\alpha$ labels $\beta_1=\beta,\beta_2,\dots,\beta_k,\beta_{k+1}=\beta$, $k\ge1$, such that $\beta_{i+1}\en\in c(\beta_i\be)$ for all $1\le i\le k$, then simultaneously increasing all the lengths $v_{\beta_i}$, $1\le i\le k$, by the same positive value leaves true all the equalities (\ref{eq:v=v}), and therefore the lengths $v_\alpha$ and $v_\beta$ are not equal with necessity. Since this contradicts to the assumed conditions, such a looped sequence does not exist.

Collect the set of cycles $\tau_1$ by the next algorithm. First, we include there the cycle $c_1=c(\beta\be)$, and then will be adding all the cycles $c(\gamma\be)$, where $\gamma\ne\alpha$, and an element $\gamma\en$ belongs to one of the cycles we have already added to $\tau_1$. Effectively, the set $\tau_1$ consists of cycles of the form $c(\beta_k\be)$ for every existing finite sequence of distinct from $\alpha$ labels $\beta_1=\beta,\beta_2,\dots,\beta_k$, $k\ge1$, such that $\beta_{i+1}\en\in c(\beta_i\be)$ for all $1\le i<k$. The cycle $c_0$ does not belong to the set $\tau_1$ according to the previous paragraph.

Consider the set $\A_0\subset\A$ of all labels $\gamma\ne\alpha$ such that $\gamma\en\in\tau_1$ (as before, this slightly abusive notation means that an element belongs to one of the cycles in the set $\tau_1$). Due to our construction, we have $\gamma\be\in\tau_1$ for all $\gamma\in\A_0$; since $c_0$ does not belong to the set $\tau_1$, we have $\beta\en\not\in\tau_1$ and $\alpha\be\not\in\tau_1$. If $\alpha\en\not\in\tau_1$, then the set of the labels of all ending elements of cycles in the set $\tau_1$ is $\A_0$, while the set of the labels of all beginning elements of those cycles includes the set $\A_0$ and contains (at least) one more label $\beta\not\in\A_0$, which contradicts to Proposition~\ref{prop:v=v} for a positive scheme. Therefore, we have $\alpha\en\in\tau_1$, and the set of the labels of all ending elements of cycles in the set $\tau_1$ is $\A_0\cup\{\alpha\}$, while the set of the labels of all beginning elements of those cycles includes the set $\A_0\cup\{\beta\}$, and in fact coincides with it due to Proposition~\ref{prop:v=v} and equality of the lengths $v_\alpha$ and $v_\beta$.

Hence we see that in the case $\alpha\ne\beta$ the interval exchange scheme $\sigma$ is splittable (at the cycle $c_0$, with sub-alphabets $\A_1=\A_0\cup\{\alpha,\beta\}$ and $\A_2=\A\backslash\A_1$) as well, and therefore is not rotational. Lemma is proved.
\end{proof}

\subsection{Realization of algorithm}

So, let us be given a floating rotational interval exchange $(\sigma,\vect)$. Look at the dual to $\sigma$ rotational interval exchange scheme $\sigma^*$. If it contains two-element cycles, then we apply to $(\sigma,\vect)$ appropriate operations of merging intervals, so that there will be no two-element cycles left in the dual scheme $\sigma^*$ afterwards. If there are more than one cycle in $\sigma^*$, then we choose one of them arbitrarily and will be applying to it elementary induction steps of the form $\Pi^{\mathrm{lb}}_{\alpha\beta}$ or $\Pi^{\mathrm{le}}_{\alpha\beta}$ consecutively, where $\alpha$ and $\beta$ are the leftmost labels in the two-row notation for this cycle $c_0=\left[\begin{array}{cc}
\alpha & \dots\\
\beta & \dots                                                                                                                                          \end{array}\right]$, the upper and the lower ones respectively (i.e., $\sigma^*(\beta\en)=\alpha\be$), until this cycle becomes two-element. The interval exchange $(\sigma,\vect)$ during this process will be transformed by the action of reverse induction steps $(\Pi^{\mathrm{re}}_{\alpha\beta})^{-1}$ or $(\Pi^{\mathrm{le}}_{\beta\alpha})^{-1}$ in accorance with Theorem~1 from~\cite{Teplinsky23}. We will ensure that the scheme $\sigma^*$ stays an interval exchange scheme (i.e., in this case, simply stays positive), the by Proposition~\ref{prop:induction-rotation} it will stay rotational, and the interval exchange $(\sigma,\vect)$ during the corresponding transformations will stay rotational too. Let us describe how we choose each time which one of the two steps $\Pi^{\mathrm{lb}}_{\alpha\beta}$ and $\Pi^{\mathrm{le}}_{\alpha\beta}$ will be applied to the dual scheme $\sigma^*$ (just for a case, notice that the labels $\alpha$ and $\beta$ will be changing accordingly to changes in the cycle $c_0$ being transformed; we simply do not wish to overcomplicate the system of notations, thus leaving unchanged the notations $(\sigma,\vect)$, $\sigma^*$, $c_0$, $\alpha$, $\beta$ for objects that in fact change as the algorithm proceeds). Note straight away that $\alpha\ne\beta$, or else $\sigma^*$ would not be rotational. Four situations are now possible.

{\bf Situation 1:} $\alpha\en\not\in c_0$ and $\beta\be\not\in c_0$. If this a two-element cycle, then by merging of the corresponding intervals in $(\sigma,\vect)$ we remove it and go for the next cycle. If not, then by Lemma~\ref{lemma:equal} there exists a positive vector of lengths $\wect$ allowed by the scheme $\sigma^*$ such that $w_\alpha\ne w_\beta$. If $w_\alpha>w_\beta$, the we will apply to $\sigma^*$ the step $\Pi^{\mathrm{lb}}_{\alpha\beta}$, and if $w_\alpha<w_\beta$, then the step $\Pi^{\mathrm{le}}_{\alpha\beta}$. This will left the scheme $\sigma^*$ positive and therefore rotational, decreasing the number of elements in the cycle $c_0$: in the first case the element $\beta\en$ will be moved to the cycle containing $\alpha\en$, while in the second case the element $\alpha\be$ will be moved to the cycle containing $\beta\be$.

{\bf Situation 2:} $\alpha\en\not\in c_0$ and $\beta\be\in c_0$. Choose any positive vector of lengths $\wect$ allowed by the scheme $\sigma^*$. Since the length $w_\beta$ enters exactly one equality of (\ref{eq:v=v}) written down for $(\sigma^*,\wect)$, and does this on both left and right, then there is no restrictions on its value whatsoever, therefore it can be replaced by any positive number lesser than $w_\alpha$ (for example, put $w_\beta=w_\alpha/2$). Thus, there is a positive vector of lengths $\wect$ allowed by the scheme $\sigma^*$ in which $w_\alpha>w_\beta$. We will apply to $\sigma^*$ the step $\Pi^{\mathrm{lb}}_{\alpha\beta}$; this will left the scheme $\sigma^*$ positive and therefore rotational, and move the element $\beta\en$ from the cycle $c_0$ to the cycle containing $\alpha\en$.

{\bf Situation 3:} $\alpha\en\in c_0$ and $\beta\be\not\in c_0$. This one is similar to Situation~2, but now it is allowed to arbitrarily change the value of the length $w_\alpha$ in a positive vector of lengths $\wect$ allowed by the scheme $\sigma^*$. In particular there exists such a vector satisfying $w_\alpha<w_\beta$, therefore applying the step $\Pi^{\mathrm{le}}_{\alpha\beta}$ will leave the scheme $\sigma^*$ positive and therefore rotational, and move the element $\alpha\be$ from the cycle $c_0$ to the cycle containing $\beta\be$.

{\bf Situation 4:} $\alpha\en\in c_0$ and $\beta\be\in c_0$. Look at the cycle $c_0$ in two-row notation. Let there are $m\ge1$ labels in its lower row to the left from $\alpha$ and $n\ge1$ labels in its upper row to the left from $\beta$, i.e., $c_0=
\left[\begin{array}{cccccc}
\alpha_1 & \alpha_2 & \dots & \alpha_n & \beta & \dots\\
\beta_1 & \beta_2 & \dots & \beta_m & \alpha & \dots
\end{array}\right]$, $\alpha_1=\alpha$, $\beta_1=\beta$. Like in Situations~2 and~3, the lengths $w_\alpha$ and $w_\beta$ in a positive vector of lengths $\wect$ allowed by the scheme $\sigma^*$ can be chosen arbitrarily, hence applying any one of the two steps $\Pi^{\mathrm{lb}}_{\alpha\beta}$ or $\Pi^{\mathrm{le}}_{\alpha\beta}$ will leave the scheme $\sigma^*$ rotational. However, the number of elements in the cycle $c_0$ will not decrease. Instead, after applying $\Pi^{\mathrm{lb}}_{\alpha\beta}$ (applying $\Pi^{\mathrm{le}}_{\alpha\beta}$) the labels in the lower row to the left from $\alpha\en$ (in the upper row to the left from $\beta\be$) will be cyclically rearranged: 

$$
\Pi^{\mathrm{lb}}_{\alpha\beta_1}:
\left[\begin{array}{cccccc}
\alpha &  & \dots &  &  & \dots\\                                                                                                                                        
\beta_1 & \beta_2 & \dots & \beta_m & \alpha & \dots
\end{array}\right]
\mapsto
\left[\begin{array}{cccccc}
\alpha &  & \dots &  &  & \dots\\                                                                                                                                        
\beta_2 & \dots & \beta_m & \beta_1 & \alpha & \dots
\end{array}\right],
$$

$$
\Pi^{\mathrm{le}}_{\alpha_1\beta}:
\left[\begin{array}{cccccc}
\alpha_1 & \alpha_2 & \dots & \alpha_n & \beta & \dots\\
\beta &  & \dots &  &  & \dots                                                                                                                                         \end{array}\right]
\mapsto
\left[\begin{array}{cccccc}
\alpha_2 & \dots & \alpha_n & \alpha_1 & \beta & \dots\\
\beta &  & \dots & &  & \dots                                                                                                                                         \end{array}\right].
$$

\noindent
If among the labels $\beta_i$, $1<i\le m$, there is one such that $\beta_i\be\not\in c_0$, then applying consecutively the induction steps $\Pi^{\mathrm{lb}}_{\alpha\beta_1},\dots,\Pi^{\mathrm{lb}}_{\alpha\beta_{i-1}}$, we will arrive at Situation~3; similarly, if among the labels $\alpha_j$, $1<j\le n$, there is one such that $\alpha_j\en\not\in c_0$, then applying consecutively the induction steps $\Pi^{\mathrm{le}}_{\alpha_1\beta},\dots,\Pi^{\mathrm{le}}_{\alpha_{j-1}\beta}$, we arrive at Situation~2. In both cases, but the next step the number of elements in the cycle $c_0$ will be decreased. Assume now that all the elements $\alpha_1\en,\dots,\alpha_n\en$ and $\beta_1\be,\dots,\beta_m\be$ belong to the cycle $c_0$. The sets $\{\alpha_2,\dots,\alpha_n\}$ and $\{\beta_2,\dots,\beta_m\}$ cannot coincide, or else the scheme $\sigma^*$ would be either splittable (by the cycle $c_0$) with the sub-alphabets $\A_1=\{\alpha_2,\dots,\alpha_n,\alpha,\beta\}$, $\A_2=\A\backslash\A_1$, and therefore not rotational by Lemma~\ref{lemma:split}, or reducible (this is if $c_0$ does not contain elements not included in $\A_1\times\{\mathrm{b},\mathrm{e}\}$). Since the scheme $\sigma^*$ is rotational and irreducible, at least one of the sets $\{\beta_2,\dots,\beta_m\}\backslash\{\alpha_2,\dots,\alpha_n\}$ and $\{\alpha_2,\dots,\alpha_n\}\backslash\{\beta_2,\dots,\beta_m\}$ is non-empty. If the first one is non-empty, i.e., there exists $\beta_i\not\in\{\alpha_2,\dots,\alpha_n\}$, $1<i\le m$, then applying consecutively the steps $\Pi^{\mathrm{lb}}_{\alpha\beta_1},\dots,\Pi^{\mathrm{lb}}_{\alpha\beta_{i-1}}$, we will arrive at Situation~4 with increased $n$ (now it will be the number of labels in the upper row to the left from $\beta_i$, which is greater than former $n$, since $\beta_i$ was located to the right of $\beta$). If the set $\{\beta_2,\dots,\beta_m\}\backslash\{\alpha_2,\dots,\alpha_n\}$ is empty, then non-empty is the set
$\{\alpha_2,\dots,\alpha_n\}\backslash\{\beta_2,\dots,\beta_m\}$, and we similarly arrive at Situation~4, but this time with increased $m$. Since $n$ and $m$ are bounded, this process cannot continue indefinitely, therefore we will necessarily eventually arrive at Situations~2 or~3.

Summarizing the described algorithm of actions for a chosen cycle $c_0$ with more than two elements, in all cases we will reduce the number of its elements by one after a finite number of induction steps. Continuing to apply this algorithm to this cycle, we eventually transform it in a two-element cycle. Then we apply the operation of merging corresponding intervals in the interval exchange $(\sigma,\vect)$, and the new scheme $\sigma^*$ will contain one cycle less than before.
 
Continuing to choose cycles in $\sigma^*$ and applying the described algorithm to them, we will get rid of them one by one, and stop at the point when the scheme $\sigma^*$ will consist of a single cycle. By our construction, the resulting interval exchange (obtained by consecutive application of respective reverse induction steps and operations of merging intervals to $(\sigma,\vect)$) is rotational, and as a dynamical system the starting interval exchange is a first return map to the respective segments for the resulting interval exchange.

\subsection{The canonical form of a rotational interval exchange}
\label{subsect:canonic}

So, we reached the situation when the dual scheme $\sigma^*$ consists of a single cycle. The special property of an interval exchange with such a scheme is that among all its endpoints there is only one type L and only one type R (recall their classification from Subsection~\ref{subsect:rotationality}), while all the rest are either type MB or type ME. Accordingly, for the interval exchange $(\sigma,\vect)$ among all its endpoints there is only one type MB and only one type ME, while all the rest are type L or type R. This means that we have only one place with $\sigma(\alpha\be)=\beta\be$ and only one place with $\sigma(\gamma\en)=\delta\en$, $\alpha,\beta,\gamma,\delta\in\A$ (among these four labels the only equalities possible are $\alpha=\gamma$ or $\beta=\delta$, and any other equality is impossible due to positivity of the schemes $\sigma$ and $\sigma^*$); in all the other places an ending element is followed by a beginning element, and an ending element is followed by a beginning one. 

Two cases are possible: if $\alpha\be$ and $\gamma\en$ belong to two different cycles in the permutation $\sigma$, then these cycles have the form $\left[\begin{array}{cc}
\alpha &  \beta\\                                                                                                                                        
\kappa &
\end{array}\right]$ and $\left[\begin{array}{cc}
\lambda &\\                                                                                                                                        
\delta & \gamma
\end{array}\right]$, $\lambda,\kappa\in\A$ (an equality $\lambda=\kappa$ is possible), and all the rest of cycles are two-element; in the opposite case we have a cycle $\left[\begin{array}{cc}
\alpha &  \beta\\                                                                                                                                        
\delta & \gamma
\end{array}\right]$, and all the rest are two-element again. 

In view of the relations (\ref{eq:v=v}) it is easy too see that positivity of the schemes $\sigma$ and $\sigma^*$ takes place only when the set of all two-element cycles in $\sigma$, in the first case, is split into three finite sequences
\begin{align*}
&\left[\begin{array}{c}
\alpha_{i+1}\\                                                                                                                                       
\alpha_i
\end{array}\right],\quad 1\le i<m,\qquad 
\text{where}\quad \alpha_1=\alpha,\quad \alpha_m=\gamma,\quad m\ge1, 
\\
&\left[\begin{array}{c}
\beta_{j+1}\\                                                                                                                                       
\beta_j
\end{array}\right],\quad 1\le j< n,\qquad 
\text{where}\quad \beta_1=\beta,\quad \beta_n=\delta,\quad n\ge1,
\\
&\left[\begin{array}{c}
\lambda_{k+1}\\                                                                                                                                       
\lambda_k
\end{array}\right],\quad 1\le k< s,\qquad 
\text{where}\quad \lambda_1=\lambda,\quad \lambda_s=\kappa,\quad s\ge1
\end{align*}
(any of these sequences can be empty); in the second case, only the first two sequences are present. The single cycle in the dual scheme $\sigma^*$ in the first case is written as\\ $\left[\begin{array}{ccccccccc}
\beta_1 & \dots & \beta_n & \lambda_1 & \dots & \lambda_s & \alpha_1 & \dots & \alpha_m\\                                                                                                                                       
\alpha_1 & \dots & \alpha_m & \lambda_1 & \dots & \lambda_s & \beta_1 & \dots & \beta_n
\end{array}\right]$, and in the second case as\\ $\left[\begin{array}{cccccc}
\beta_1 & \dots & \beta_n & \alpha_1 & \dots & \alpha_m\\                                                                                                                                       
\alpha_1 & \dots & \alpha_m & \beta_1 & \dots & \beta_n
\end{array}\right]$.

The equalities (\ref{eq:v=v}) imply, in particular, that $v_\alpha=v_\gamma$ and $v_\beta=v_\delta$ in both cases, and an additional relation $v_\lambda=v_\kappa=v_\alpha+v_\beta$ in the first case.

If the first case takes place (i.e., the single cycle in $\sigma^*$ has the form\\ $\left[\begin{array}{ccccccccc}
\beta_1 & \dots & \beta_n & \lambda_1 & \dots & \lambda_s & \alpha_1 & \dots & \alpha_m\\                                                                                                                                       
\alpha_1 & \dots & \alpha_m & \lambda_1 & \dots & \lambda_s & \beta_1 & \dots & \beta_n
\end{array}\right]$ with $s\ge1$), then we will reduce it to the second case by consecutively applying the induction steps $\Pi^{\mathrm{le}}_{\beta_1\alpha_1},\dots,\Pi^{\mathrm{le}}_{\beta_n\alpha_1}$ to $\sigma^*$. As it was in Situation~4 from previous Subsection, the labels to the left from $\alpha_1$ in the upper row rearrange cyclically (while the scheme stays positive, and therefore rotational, due to the absence of restrictions on lengths), finally forming the sequence $\lambda_1, \dots , \lambda_s, \beta_1, \dots, \beta_n$, which coincides with that in the lower row to the right from $\alpha_m$.

Hence, in all cases we arrive at a situation when the rotational scheme $\sigma$ has the \emph{canonical form}
\begin{equation}\label{eq:canon}
\sigma_{\mathrm{can}}=\left\{
\left[\begin{array}{cc}
\alpha_1 &  \beta_1\\                                                                                                                                        
\beta_n & \alpha_m
\end{array}\right];\quad
\left[\begin{array}{c}
\alpha_{i+1}\\                                                                                                                                       
\alpha_i
\end{array}\right], 1\le i<m;\quad
\left[\begin{array}{c}
\beta_{j+1}\\                                                                                                                                       
\beta_j
\end{array}\right], 1\le j< n
\right\}
\end{equation}
for a certain set of pairwise different labels $\alpha_1,\dots,\alpha_m,\beta_1,\dots,\beta_n$ and certain positive integers $m$ and $n$. Taken with an allowed positive vector of lengths $\vect=\vect_{\mathrm{can}}$, the rotational scheme in the canonical form constitutes a rotational interval exchange $(\sigma_{\mathrm{can}},\vect_{\mathrm{can}})$ in the \emph{canonical form}. The components of the vector of lengths satisfy the relations $v_{\alpha_1}=\dots=v_{\alpha_m}$ and $v_{\beta_1}=\dots=v_{\beta_m}$, and we will denote these two lengths simply as $v_{\alpha}$ and $v_{\beta}$ respectively (it is not impossible for them to be equal as well).

Summarizing the last two Subsections, we have constructively proved the following proposition.
\begin{proposition}\label{prop:canon}
An arbitrary irreducible rotational interval exchange can be transformed into the canonical form by consecutively applying a finite number of reverse elementary steps of induction and operations of merging intervals; moreover, at each step of this process, the transformed IRE stays an irreducible rotational interval exchange.
\end{proposition}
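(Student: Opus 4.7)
The plan is to prove the proposition by explicit construction, working primarily with the dual scheme $\sigma^*$ (which is itself an interval exchange scheme, since $\sigma$ is rotational) and reducing it to a single cycle in canonical position one cycle at a time. Each elementary operation on $\sigma^*$ gets mirrored on $(\sigma,\vect)$: a straight induction step on $\sigma^*$ corresponds to a reverse induction step on $\sigma$ by Theorem~1 of~\cite{Teplinsky23}, and a merging on $(\sigma,\vect)$ corresponds to deleting a two-element cycle of $\sigma^*$.

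The first phase strips all two-element cycles from $\sigma^*$ via $\Sigma_{\alpha\beta}$ on $(\sigma,\vect)$. Then I would pick any remaining cycle $c_0$ of $\sigma^*$ written in two-row notation, let $\alpha,\beta$ be its leftmost upper and lower labels, and apply $\Pi^{\mathrm{lb}}_{\alpha\beta}$ or $\Pi^{\mathrm{le}}_{\alpha\beta}$. Choosing the right direction amounts to exhibiting an allowed positive vector of lengths $\wect$ for $\sigma^*$ with the desired strict inequality between $w_\alpha$ and $w_\beta$, so that positivity of $\sigma^*$ is preserved; by Proposition~\ref{prop:induction-rotation}, rotationality then survives as well. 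This splits into the four situations of the previous subsection: in Situations~2 and~3 (exactly one of $\alpha\en,\beta\be$ lies in $c_0$), the absent partner's length is unconstrained by~(\ref{eq:v=v}) and can be tuned freely; in Situation~1 (neither lies in $c_0$), Lemma~\ref{lemma:equal} delivers an allowed vector with $w_\alpha\neq w_\beta$, so we choose the step matching the inequality. Each such step strictly decreases the length of $c_0$.

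The main obstacle is Situation~4, where both $\alpha\en$ and $\beta\be$ lie in $c_0$: a single step merely permutes the top or bottom row cyclically without shortening $c_0$. The key claim to establish is that this situation cannot persist indefinitely. For this I would use Lemma~\ref{lemma:split} together with irreducibility of $\sigma^*$: if every label $\beta_i$ in the lower row left of $\alpha$ satisfied $\beta_i\be\in c_0$ and every $\alpha_j$ in the upper row left of $\beta$ satisfied $\alpha_j\en\in c_0$, the two label sets $\{\alpha_2,\dots,\alpha_n\}$ and $\{\beta_2,\dots,\beta_m\}$ would have to coincide (else a finite sequence of Situation~4 steps advances us directly to Situation~2 or~3), and coincidence forces $\sigma^*$ to be splittable with sub-alphabet $\A_1=\{\alpha_2,\dots,\alpha_n,\alpha,\beta\}$ or reducible, contradicting rotationality and irreducibility. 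Hence a finite number of Situation~4 steps falls into Situation~2 or~3, after which $|c_0|$ drops. Iterating shrinks $c_0$ to two elements, which is then merged away, and repeating over all cycles leaves $\sigma^*$ with a single cycle.

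Finally, once $\sigma^*$ is a single cycle, the structural analysis from Subsection~\ref{subsect:canonic} shows $\sigma$ has either the canonical layout already or an additional middle chain $\lambda_1,\dots,\lambda_s$; in the latter case the unrestricted nature of $w_\alpha,w_\beta$ again allows $\Pi^{\mathrm{le}}_{\beta_1\alpha_1},\dots,\Pi^{\mathrm{le}}_{\beta_n\alpha_1}$ to be applied in sequence, cyclically rotating the upper row into position~(\ref{eq:canon}) while preserving positivity at every step. Throughout, induction steps preserve irreducibility (the cycle structure of $\sigma$ on sub-alphabets is only rearranged, not broken), merging of two-element dual cycles preserves it as well, and rotationality is preserved by Proposition~\ref{prop:induction-rotation}; so the entire trajectory stays within irreducible rotational interval exchanges, completing the proof.
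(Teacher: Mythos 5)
Your proposal follows the paper's own proof almost line for line: work in the dual scheme $\sigma^*$, strip two-element cycles by merging, then pick a cycle $c_0$ and apply $\Pi^{\mathrm{lb}}_{\alpha\beta}$ or $\Pi^{\mathrm{le}}_{\alpha\beta}$ to its leftmost pair of labels, branching on the same four situations and invoking Lemma~\ref{lemma:equal} in Situation~1 and the unconstrained lengths in Situations~2--3, with Proposition~\ref{prop:induction-rotation} keeping everything rotational. The final cleanup of the middle chain $\lambda_1,\dots,\lambda_s$ is also identical. So there is no real divergence of method.

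The one place you should tighten is the termination argument for Situation~4. You write that if all $\beta_i\be$ and $\alpha_j\en$ lie in $c_0$, then the two sets ``would have to coincide (else a finite sequence of Situation~4 steps advances us directly to Situation~2 or~3),'' which gets the logic slightly tangled. The paper's argument is: (i) if some $\beta_i\be\notin c_0$ or some $\alpha_j\en\notin c_0$, a short run of Situation~4 steps reaches Situation~2 or~3 directly; (ii) if all of them lie in $c_0$, then the sets \emph{cannot} coincide (coincidence would force splittability or reducibility, contradicting Lemma~\ref{lemma:split} and irreducibility), and the non-coincidence lets you pick the rotation step so that the parameter $n$ or $m$ \emph{strictly increases}; since $n,m\le|\A|$, this strict increase is the potential function that bounds the number of Situation~4 iterations and eventually forces a drop into case~(i). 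Your phrasing leaves out the monotone-and-bounded observation on $n,m$, which is the actual reason the Situation~4 loop cannot persist; without it, ``a finite sequence advances us to 2 or 3'' is asserted rather than proved. State that invariant explicitly and the proof is complete and matches the paper's.
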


\begin{remark}
As the presented algorithm shows, in fact, for transforming a rotational interval exchange into the canonical form it is enough to restrict the use of reverse elementary steps of induction by two types only, namely $(\Pi^{\mathrm{le}}_{\alpha\beta})^{-1}$ and $(\Pi^{\mathrm{re}}_{\alpha\beta})^{-1}$. Similarly, it would be enough to use the other two types only, namely $(\Pi^{\mathrm{lb}}_{\alpha\beta})^{-1}$ and $(\Pi^{\mathrm{rb}}_{\alpha\beta})^{-1}$; in application to the dual rotational scheme, the elementary induction steps $\Pi^{\mathrm{re}}_{\alpha\beta}$ and $\Pi^{\mathrm{rb}}_{\beta\alpha}$ respectively correspond to these two reverse steps accordingly to Theorem~1 from~\cite{Teplinsky23}.
\end{remark}

\subsection{The construction on a circle}
\label{subsect:circle}

For the rotational interval exchange $(\sigma_{\mathrm{can}},\vect_{\mathrm{can}})$ in the canonical form (\ref{eq:canon}), which we obtained from the starting rotational interval exchange $(\sigma,\vect)$, let us take big enough integers $k_1$ and $k_2$ and construct a circle rotation $R_{L,M}$ with $M=v_\beta+k_2v_\alpha$ and $L=v_\alpha+k_1M$. We consider this circle rotation in its projection onto the segment $[-v_\alpha,k_1M)$ according to (\ref{eq:turn_alt}), with $x_0=-v_\alpha$, i.e, as the map
$$
R_{L,M}:x\mapsto\left\{
\begin{array}{ll}
x+M, & x\in[-v_\alpha,(k_1-1)M)\\
x+M-L, & x\in[(k_1-1)M,k_1M)
\end{array}
\right..
$$ 

Let us mark on $[-v_\alpha,k_1M)$ the points $a_i=R_{L,M}^i(0)$, $0\le i<q$, of the trajectory segment of length $q=1+k_1+k_2k_1$ starting with the point $a_0=0$, under the action of $R_{L,M}$. It is easy to check that these $1+k_1+k_2k_1$ points are ordered as follows, from left to right:
$a_{k_1}=-v_\alpha$, $a_0=0$, then the array of $k_2+1$ points
$a_{k_2k_1+1}=v_\beta$, $a_{(k_2-1)k_1+1}=v_\beta+v_\alpha$, \dots, $a_{k_1+1}=v_\beta+(k_2-1)v_\alpha$, $a_1=M$, and then consecutively $k_1-1$ more of such arrays, shifted by $1\le j<k_1$ rotations $R_{L,M}$, i.e., the arrays of $k_2+1$ points
$a_{k_2k_1+1+j}=v_\beta+jM$, $a_{(k_2-1)k_1+1+j}=v_\beta+v_\alpha+jM$, \dots, $a_{k_1+1+j}=v_\beta+(k_2-1)v_\alpha+jM$, $a_{1+j}=(1+j)M$, where in the last array (for $j=k_1-1$) the last point $a_{1+(k_1-1)}=k_1M$ is the right endpoint of the segment $[-v_\alpha,k_1M)$, which in projection coincides with its left endpoint $a_{k_1}=-v_\alpha$ we already included in the list. Also note that $a_q=R_{L,M}^q(0)=v_\beta-v_\alpha$.

In view of this order and the equalities $a_{k_1}=-v_\alpha$, $a_0=0$, $a_{k_2k_1+1}=v_\beta$ it is easy to see that the points $a_i$, $0\le i<q$, split the circle $[-v_\alpha,k_1M)$ into $k_1$ arcs of length $v_\beta$, namely the arcs $R_{L,M}^i[0,v_\beta)$, $0\le i<k_1$, and $k_2k_1+1$ arcs of length $v_\alpha$, namely the arcs $R_{L,M}^i[-v_\alpha,0)$, $0\le i<k_2k_1+1$. Any two of these $1+k_1+k_2k_1$ arcs do not overlap, and there union is the whole circle. The arcs $R_{L,M}^{k_1}[0,v_\beta)=[a_{k_1},a_q)=[-v_\alpha,v_\beta-v_\alpha)$ and $R_{L,M}^{k_2k_1+1}[-v_\alpha,0)=[a_q,a_{k_2k_1+1})=[v_\beta-v_\alpha,v_\beta)$ also do not overlap and there union is the arc $[-v_\alpha,v_\beta)$, which is also the union of the arcs $[0,v_\beta)$ and $[-v_\alpha,0)$. 

Having in mind this construction on the circle, let us choose the arc $[-v_\alpha,v_\beta)$, any $m-1$ arcs among $R_{L,M}^i[-v_\alpha,0)$, $0\le i<k_2k_1+1$, and any $n-1$ arcs among $R_{L,M}^i[0,v_\beta)$, $0\le i<k_1$, in such a way that no two arcs would touch by there endpoints (this is clearly possible as soon as $k_1$ and $k_2$ are big enough). In accordance with our construction, the dynamical system determined by the first return map for the circle rotation $R_{L,M}$ to the union of chosen $n+m-1$ arcs is identical to the dynamical system of the rotational interval exchange in the canonical form $(\sigma_{\mathrm{can}},\vect_{\mathrm{can}})$. And since the latter interval exchange in the canonical form was obtained by consecutive application of reverse induction steps and operations of merging intervals towards the starting rotational interval exchange $(\sigma,\vect)$, then the dynamical system of the last one is, in its turn, determined by a first return map to a certain finite union of segments in the phase space of the dynamical system $(\sigma_{\mathrm{can}},\vect_{\mathrm{can}})$. Choosing on the circle the union of arcs that corresponds to this union of segments, we obtain the finite union of arcs such that the first return map to it for the circle rotation $R_{L,M}$ is shift equivalent to the starting irreducible rotational interval exchange $(\sigma,\xect)$.

The statement~2 of Theorem~\ref{theorem:main} is proved.

\section{Proving the third statement of Theorem}
\label{sect:proof_3}

In the third part of Theorem we formulated a criterion for an interval exchange scheme to be rotational in terms of a first return map to a union of arcs for an irrational circle rotation. Effectively, this statement is almost implied by the first two statements of Theorem, which we already proved, namely: the first statement implies that if the mentioned first return map exists, then the corresponding irreducible IRE scheme is rotational; the second statement implies that for an irreducible rotational scheme there exists a first return map with this scheme. The only thing left to be proved is that for an irreducible rotational scheme there exists a corresponding first return map for an irrational circle rotation namely. To do this now, we will return to the algorithm of transforming a rotational interval exchange to the canonical form that we used in the previous Section.

So let us be given an irreducible rotational interval exchange scheme $\sigma$. By choosing an arbitrary allowed vector of lengths $\vect$ we obtain a rotational interval exchange $(\sigma,\vect)$. Transform it into the canonical form $(\sigma_{\mathrm{can}},\vect_{\mathrm{can}})$ according to Proposition~\ref{prop:canon}. It is shown in Subsection~\ref{subsect:circle} that the canonical interval exchange with lengths $v_\alpha$ and $v_\beta$ is a first return map for the circle rotation $R_{L,M}$ with $M=v_\beta+k_2v_\alpha$ and $L=v_\alpha+k_1M$ with certain positive integers $k_1$ and $k_2$. The rotation number of this circle rotation $\rho=M/L=1/(k_1+1/(k_2+\rho_0))$, where $\rho_0=v_\beta/v_\alpha$, is either rational, or irrational, depending on rationality or irrationality of the number $\rho_0$. Hence, if the lengths $v_\alpha$ and $v_\beta$ are incommensurable, then the necessary irrational circle rotation is already constructed. 

Assume that the lengths $v_\alpha$ and $v_\beta$ are commensurable, i.e., that $\rho_0=v_\beta/v_\alpha\in\Q$. In this case we simply change them by small perturbation in such a way that they stop to be commensurable, for ex., by replacing the lengths $v_\beta$ by $v_\beta'=v_\beta+\varepsilon$, where $0<\varepsilon\ll 1$, $\varepsilon/v_\beta\not\in\Q$. Now let us conduct the whole algorithm of transformation into the canonical form, but backwards. The interval exchange $(\sigma_{\mathrm{can}},\vect_{\mathrm{can}})$ was obtained from $(\sigma,\vect)$ by applying a finite number of reverse steps of induction and operations of merging intervals. Accordingly, let us apply in backward order consecutively the corresponding induction steps and operations of splitting intervals (into determined parts). Obviously, each of these operations is robust in the sense that, after its application, small perturbations in real components (lengths) of an interval exchange stay small, and therefore discrete components (schemes) stay unchanged. Hence, starting with the perturbed as we said canonical interval exchange $(\sigma_{\mathrm{can}},\vect'_{\mathrm{can}})$ and applying the algorithm of transformation in backward order, we obtain a perturbed interval exchange $(\sigma,\vect')$ with the same given scheme $\sigma$, and this perturbed interval exchange is the first return map for now irrational circle rotation with rotation number $\rho'=1/(k_1+1/(k_2+\rho'_0))$, where $\rho'_0=(v_\beta+\varepsilon)/v_\alpha\not\in\Q$.

The statement~3 of Theorem~\ref{theorem:main} is proved. 

This work was financially supported in part by the National Academy of Sciences of Ukraine project ``Mathematical modeling of complex dynamical systems and processes related to security of the state'', Reg.~no.\ 0123U100853.

The author declares the absence of a conflict of interests.

\end{document}